\documentclass[final]{siamltex}

\usepackage{amsmath}
\usepackage{amssymb} 
\usepackage{amsfonts} 
\usepackage{graphicx}
\usepackage{overpic}
\usepackage{subfigure}
\usepackage{multirow}
\usepackage{placeins}            
\usepackage{color}
\usepackage{array}               
\usepackage{booktabs}            
\usepackage{tabularx}

\def\Re{\text{Re}}
\def\Im{\text{Im}}

\def\real{{\mathbb R}}
\def\complex{{\mathbb C}}
\def\natural{{\mathbb N}}

\def\F{\mathcal{F}}
\def\FL{F^{}_L}
\def\xL{x^{}_L}

\def\order{\mathcal{O}}
\def\ordera{\mathcal{O}_{\hspace{-1pt} a}^{}}
\def\psii{\psi_{}^{-1}}
\def\psiE{\psi_\text{\tiny E}^{}}

\def\phiE{\varphi_\text{\tiny E}^{}}

\def\psiEi{\psi_\text{\tiny E}^{-1}}

\def\phiEi{\varphi_\text{\tiny E}^{-1}}

\def\phiS{\varphi_\text{\tiny S}^{}}
\def\psiS{\psi_\text{\tiny S}^{}}
\def\phiSi{\varphi_\text{\tiny S}^{-1}}
\def\psiSi{\psi_\text{\tiny S}^{-1}}
\def\Pn{P^{(n)}_{}}
\def\pnL{p_L^{(n)}}
\def\P{\mathcal{P}}

\def\R{\mathcal{R}}
\def\S{\mathcal{S}}

\def\s{\hspace{-2pt}}



\def\ee{\hspace{-2pt}=\hspace{-2pt}}

\usepackage[normalem]{ulem}

\DefineNamedColor{named}{JungleGreen} {cmyk}{0.99,0,0.52,0}

\DefineNamedColor{named}{Purple}{cmyk}{0.45,0.86,0,0}

\title{New exponential variable transform methods for functions with endpoint singularities}

\author{Ben Adcock%
        \thanks{Department of Mathematics,	
		Purdue University,
		150 N.~University Street,
		West Lafayette, 
		IN 47907,
		USA
                  (\texttt{adcock@purdue.edu},
                   \texttt{http://http://www.math.purdue.edu/~adcock/}).
                }
    \and
          Mark Richardson%
        \thanks{Oxford University Mathematical Institute,
                24--29 St Giles', 
                Oxford OX1 3LB, UK
                (\texttt{mark.richardson@maths.ox.ac.uk},
                \texttt{http://people.maths.ox.ac.uk/richardsonm/}).
                }}

\begin{document}

\maketitle

\begin{abstract} 
The focus of this article is the approximation of functions which are analytic on a compact interval except at the endpoints.  Typical numerical methods for approximating such functions depend upon the use of particular conformal maps from the original interval to either a semi-infinite or an infinite interval, followed by an appropriate approximation procedure on the new region.  We first analyse the convergence of these existing methods and show that, in a precisely defined sense, they are sub-optimal.  Specifically, they exhibit poor \emph{resolution} properties, by which we mean that many more degrees of freedom are required to resolve oscillatory functions than standard approximation schemes for analytic functions such as Chebyshev interpolation. 

To remedy this situation, we introduce two new transforms; one for each of the above settings.  We provide full convergence results for these new approximations and then demonstrate that, for a particular choice of parameters, these methods lead to substantially better resolution properties.  Finally, we show that optimal resolution power can be achieved by an appropriate choice of parameters, provided one forfeits classical convergence.  Instead, the resulting method attains a finite, but user-controlled accuracy specified by the parameter choice.
\end{abstract}

\begin{keywords} Chebyshev interpolation, conformal map, endpoint singularity, resolution analysis
\end{keywords}

\begin{AMS}  \end{AMS}

\pagestyle{myheadings}
\thispagestyle{plain}
\markboth{B. ADCOCK AND M. RICHARDSON}
{NEW EXPONENTIAL VARIABLE TRANSFORM METHODS}

\section{Introduction}\label{sect:intro}  

The modern practitioner in scientific computing has an abundance of numerical methods at their disposal for approximating and computing with analytic functions on an interval. Most notable amongst these are Fourier series for periodic functions and Chebyshev series for the more general situation~\cite{BoydBook,TrefethenATAP}. In each of these cases, the convergence for analytic funtions is known to be geometric; that is, the error decreases like $\order(C^{-n})$ as $n \to \infty$ where $n$ is the number of degrees of freedom in the approximation, and $C > 1$.

The focus of this article is a particular aspect of the modified situation whereby a function is analytic on an interval, except possibly at the endpoints.  The techniques we shall describe in this paper fall into the broad class of what might be called \emph{variable transform methods} (see the work of Stenger and others~\cite{LundBowers,StengerSiam81,
StengerBook93,StengerBook10}). The basic idea is as follows: given a function $f(x)$ defined on $[0,1]$, and an invertible mapping $\psi:(0,1) \mapsto (-\infty,\infty)$, let $\F(s) = f(\psii(s))$, a function defined on the real line. Under relatively mild continuity assumptions on $f$, common choices of the transformation $\psi$ result in transplanted functions $\F$ which decay at least exponentially fast to their respective limiting values as $s \to \pm \infty$. Thus, by selecting a subinterval of the real $s$-axis, say $[-L,L]$ for some $L > 0$, and approximating $\F$ there, one can expect to obtain a good approximation to $\F$ on the whole real line.

The purpose of this paper is twofold.  First, we describe and analyse a particular phenomenon observed in existing variable transform methods~\cite{RichardsonTrefethen11}. This is that, in a sense which we will soon describe precisely, the standard transformations used to transplant $f(x)$ to $\F(s)$ lead to poor \textit{resolution} properties.  Loosely speaking, this means that they require many extra degrees of freedom to resolve oscillatory functions with endpoint singularities than is required in the case of analytic oscillatory functions.

With a view to improving this situation, the second purpose of this paper is to introduce several new maps.  Each new map depends on a parameter $\alpha$, which, as we show can be chosen in such a way so as to deliver far better resolution properties.  We also provide a rigorous convergence analysis of these mappings, from which we conclude that these new mappings offer both similar convergence to the existing maps and vastly improved resolution power.  Moreover, in the final part of the paper, we show that it is possible to obtain formally optimal resolution properties by choosing $\alpha$ in such a way that classical convergence is forfeited for convergence down to a finite, but user-controlled, maximal accuracy.

\subsection{Variable transform methods}
Given a function defined on an arbitrary bounded interval, it is always possible to obtain an affine transform which transplants the function to any other finite interval. Therefore, in this work, without loss of generality, we shall consider functions $f(x)$ continuous on $[0,1]$ and analytic on at least $(0,1)$. The reason for using $[0,1]$, rather than, say, the more usual $[-1,1]$, has to do with the relative densities of representable numbers in floating-point arithmetic. In short, singularities are much better handled if they happen to be at $x=0$, rather than away from zero. This setup is standard in the literature; see for example~\cite{RichardsonTrefethen11,StengerBook10}.

In general, variable transform methods constitute two key steps: i) transplantation of the function $f$ to the infinite interval $(-\infty,\infty)$ using a map $\psi$; and, ii) approximation of the transplant $\F$ using a suitable infinite interval basis set. Sinc functions are typically a sensible choice for the latter. We note, however, that in certain circumstances it is possible to utilise transformations which do not map to the infinite interval, but to rather a semi-infinite interval, such as $(-\infty,0]$. This may be a desirable strategy to use if, for example, the function $f$ is analytic on $(0,1]$, rather than just on $(0,1)$. We will investigate both settings in this paper.

Regarding the choice of basis, we note that in the semi-infinite interval setting, one cannot simply use $\mathrm{sinc}$ functions to approximate $\F$, since the transplant is not defined on $\real$. As in the related works of Boyd~\cite{Boyd82} and Richardson~\cite{Richardson13}, the approach we will take is to use Chebyshev interpolants. These are certainly not the only option. However their familiarity to practioners and ubiquity, for example, in packages such as Chebfun~\cite{Chebfun}, make them a sound choice.  In the specific instance of this paper, they also allow us to make a thorough theoretical comparison between different mappings, using the classical theory of Bernstein for polynomial approximation.

We will also elect to use Chebyshev interpolants in the infinite interval setting. This is slightly unnatural, since $\mathrm{sinc}$ functions are the optimal basis on the real line and outperform Chebyshev interpolants by an asymptotic factor of $\pi/2$ in the number of degrees of freedom required to represent a function to a fixed precision. Boyd has also shown that Fourier domain truncation is superior to Chebyshev domain truncation \cite{Boyd88}.  Indeed, we recommend the use of either $\mathrm{sinc}$ functions or Fourier interpolants if it is one's goal to achieve the most efficient approximation possible.  As stated above, our reason for using Chebyshev interpolants uniformly for all our analysis is in order to help provide expository clarity between the different sections of the paper. We note, however, that at least for the purposes of the present investigation, the choice of basis is somewhat moot. The focus of this paper is the study of the relative performance of the different maps once the underlying approximation has been fixed.

\subsection{Resolution power}\label{sect:res} 
The resolution power of a numerical scheme has traditionally been assessed by studying the complex exponential $e^{2\pi i \omega x}$. This approach was first advocated by Gottlieb and Orszag~\cite{GottliebOrszag}, with subsequent investigations including~\cite{AdcockHuybrechs,WeidemanTrefethen88}. This strategy has the benefit of providing a very clear quantitative measure of a numerical scheme -- the number of \emph{points per wavelength} (ppw) required to resolve an oscillatory function -- and therefore provides a direct way of comparing different methods.

\begin{definition} Let $\{ \Psi^{(n)}_{} \}_{n \in \natural}$ denote an approximation scheme under which $\Psi^{(n)}_{}(f)$ involves $n$ pointwise evaluations of $f$ for any function $f$. Then given  $\delta \in (0,1)$ and $\omega \in \real$, the $\delta$-resolution of $\{ \Psi^{(n)}_{} \}_{n \in \mathbb{N}} $ is the function
\begin{align}\label{eqn:epsilonResolution}
 \R(\omega;\delta) = \min \left\{ n \in \natural : \| e^{2 \pi i \omega x} - \Psi^{(n)}_{}(e^{2 \pi i \omega x})  \|_{x \in [0,1]} < \delta \right\}.
\end{align}
\end{definition}
(Here, and elsewhere in the paper, $\|\cdot\|$ denotes the $\infty$-norm.)
$\R(\omega;\delta)$ is therefore the minimum number of function samples required by the approximation scheme $\{\Psi^{(n)}_{}\}_{n \in \mathbb{N}}$ to resolve the complex exponential $e^{2 \pi i \omega x}$ to within an error of $\delta$ on $[0,1]$. 

\begin{definition} The resolution constant $r \in [0,\infty]$ of $\{ \Psi^{(n)}_{} \}_{n \in \mathbb{N}} $ is 
\begin{align}\label{eqn:resolutionConstant}
r  = \limsup_{\delta \to 1^{-}} \limsup_{|\omega| \to \infty} \frac{\R(\omega;\delta)}{|\omega|}.
\end{align}
\end{definition}
If $r < \infty$, the scheme $\{ \Psi^{(n)}_{} \}_{n \in \mathbb{N}} $ is said to have \emph{linear} resolution power.

The resolution constant pertains to the asymptotic ppw of a numerical scheme. In particular, we note that if $\{\Psi^{(n)}_{}\}_{n \in \mathbb{N}}$ is Chebyshev interpolation, then $r=\pi$, and if $\{\Psi^{(n)}_{}\}_{n \in \mathbb{N}}$ is Fourier interpolation, then $r = 2$ (provided in this case $\omega$ in~\eqref{eqn:resolutionConstant} is restricted to integer values). Although both of these schemes have linear resolution power, Fourier interpolation is clearly more efficient than Chebyshev interpolation for periodic functions.

At this stage, the reader may wonder why, in an article concerning functions with singularities, we analyse resolution by examining the analytic function $f(x) = e^{2 \pi i \omega x}$.  We note that our results regarding $\R$ are completely unchanged if we allow the more general form $f(x) = g(x) e^{2 \pi i \omega x}$, where $g(x)$ is analytic on $(0,1)$, continuous on $[0,1]$, and independent of $\omega$.  For simplicity, we consider the case $g(x) = 1$.

Let us also make the following remark.  Resolution power, although a quantitative way of comparing different methods, is not a substitute for a proper convergence analysis.  It does not, for example, provide any information about how well a method deals with other phenomena arising in function approximation, e.g.\ boundary layers.  For this reason,  we shall provide both analyses in this paper.  Namely, we derive error bounds for each of the methods considered and ppw estimates.

\subsection{Summary of results} In each of the semi-infinite and infinite interval settings, we will analyse an unparameterised map and a parameterised one, giving a total of four maps for consideration. To distinguish between the two situations, we introduce some notation: The symbol $\varphi$ denotes semi-infinite interval maps, whilst the symbol $\psi$ denotes infinite interval maps.  The symbols we use to denote the individual maps are $\phiE$, $\phiS$, $\psiE$, $\psiS$. The first two are semi-infinite interval transforms, and the second two infinite interval ones. The subscript ``E'' stands for ``exponential'', whilst ``S'' stands for ``slit-strip'' (see later).

For the paramaterised slit-strip maps $\phiS$ and $\psiS$, we derive both a convergence and a resolution result. For the unparamaterised maps $\phiE$ $\psiE$ we derive only a resolution estimate, since the convergence estimates were given already in~\cite{Richardson13}. A summary of our results is given in Table~\ref{table:resultsSummary}.  Note that here and elsewhere in the paper we use the notation $X(n) = \ordera(\chi(n))$ as $n \to \infty$ if there exists a fixed $p > 0$ such that $X(n) = \order( n^{p}_{} \chi(n))$ as $n \to \infty$.

Before we begin to establish these results, let us first explain how they come about.  By transplanting singular points to infinity, the convergence rate of the approximation is determined by a region of analyticity in the $s$-plane. Therefore we have two demands on that region.  First, it should contain as large a Bernstein ellipse as possible, and second it should make the image of this region in the $x$-plane -- which amounts to a condition that the original $f$ must satisfy -- as natural and accommodating to `typical' functions as possible.  As we shall see, the second is a singular failing of both $\phiE$ and $\psiE$, and is overcome by the new mappings $\phiS$ and $\psiS$.  The first requirement is met by taking $\alpha$ large and fixed.  On the other hand, resolution is determined by the amount of length distortion introduced by the conformal map, and how this interacts with the Chebyshev endpoint node clustering.  As we determine, good resolution requires a fixed $L$.  With the new maps this is possible, since one can vary $\alpha$ instead to obtain convergence.

\begin{table}[ht] 
\centering
\renewcommand{\arraystretch}{1.3} 
\begin{tabular}{c | c c c c}
                                  & $\phiE$, $\phiS$  & $\psiE$, $\psiS$ & $\phiS$ & $\psiS$
\\ \hline 
\multicolumn{1}{c|}{$\alpha$}  & ---, $\alpha^{}_{0}$ & ---, $\alpha^{}_{0}$ & $\alpha^{}_{0}/\sqrt{n}$ & $\alpha^{}_{0}/\sqrt{n}$  \\
\multicolumn{1}{c|}{$L$}  & $c n^{2/3}$ & $c \sqrt{n}$  & $1 + L^{}_{0}$ & $1/2 + L^{}_{0}$  \\
\multicolumn{1}{c|}{error}  & \rule{0pt}{0.5cm} $\ordera \s \left( C_{}^{-n^{2/3}} \right)$  & $\ordera \s \left( C_{}^{-\sqrt{n}} \right)$  & $\ordera \s \left( C_{}^{-\sqrt{n}} \right)$ & $\ordera \s \left( C_{}^{-\sqrt{n}} \right)$ \\ 
 \multicolumn{1}{c|}{d.o.f.}   & \rule{0pt}{0.5cm} $\order(\omega^{3/2})$ & $\order(\omega^2)$ & $\order(\omega)$ & $\order(\omega)$
\\ \multicolumn{1}{c|}{$r$}   & \rule{0pt}{0.5cm} $\infty$ & $ \infty$ & $(1+L^{}_{0})\pi$ & $(1 + 2L^{}_{0})\pi$
\end{tabular}\vspace{0.5em} 
\caption{\label{table:resultsSummary} \small 
A summary of the various convergence and resolution results. The rows indicate: the parameters $\alpha$ and $L$; the bounds on the approximation error; the degrees of freedom required to begin resolving  $\exp(2 \pi i \omega x)$ as $\omega \to \infty$; and the resolution constant $r$ (or the ppw figure). We analyse two parameter choice regimes: (i) $\alpha$ fixed, $L \to \infty$, summarised in the first two columns; and (ii) $L$ fixed, $\alpha \to 0$, summarised in the last two columns. Our key results are that with the new maps $\phiS$ and $\psiS$, one can obtain both rapid convergence and a finite ppw figure. We note that the figures for $r$ in the last two columns can be brought arbitrarily close to the optimal value of $\pi$ ppw by choosing $L^{}_0 > 0$ appropriately.
}
\end{table}

\section{Semi-infinite interval maps}\label{sect:semi} 

Let $f(x)$ be analytic on $(0,1]$ and continuous on $[0,1]$. Suppose further that $\varphi$ is an invertible mapping satisfying
\begin{align*}
 \varphi: (0,1] \mapsto (-\infty,0].
\end{align*}
We shall assume that $\varphi(0) = -\infty$, and $\varphi(1) = 0$.   $\F(s)$, the transplant of $f(x)$ to the $s$-variable, is defined thus:
\begin{align*}
 \F(s) = f(\varphi^{-1}_{}(s)), \qquad s \in  (-\infty,0].
\end{align*}

Our interest is in transforms $\varphi$ which represent an exponential change of variables. This means that, given mild smoothness assumptions on $f$, we wish to select a map $\varphi$ such that the corresponding transplant $\F(s)$ converges exponentially to the limiting value $f(0)$ as $s \to -\infty$. If this is the case, then rather than representing $\F$ on the interval $(-\infty,0]$, we instead construct an approximation on an interval $[-L,0]$, where $L>0$ is sufficiently large.  As discussed, we shall use Chebyshev interpolation for this purpose. Since the canonical Chebyshev interpolation domain is the unit interval, we define the following function, which represents a scaling of $[-L,0]$ to $[-1,1]$: 
\begin{align*}
 \FL(y) = \F(L(y-1)/2), \qquad y \in (-\infty,1].
\end{align*}
Note here that though, strictly speaking, $\FL$ is defined on a semi-infinite domain, we will only ever be interested in sampling it on the interval $[-1,1]$.

The three functions $f(x), \F(s)$ and $\FL(y)$ have a pointwise correspondence across their respective domains; see Figure~\ref{fig:functionsSemiInf}.

\begin{figure}[ht]
\begin{center}
\begin{overpic}[scale=0.68]{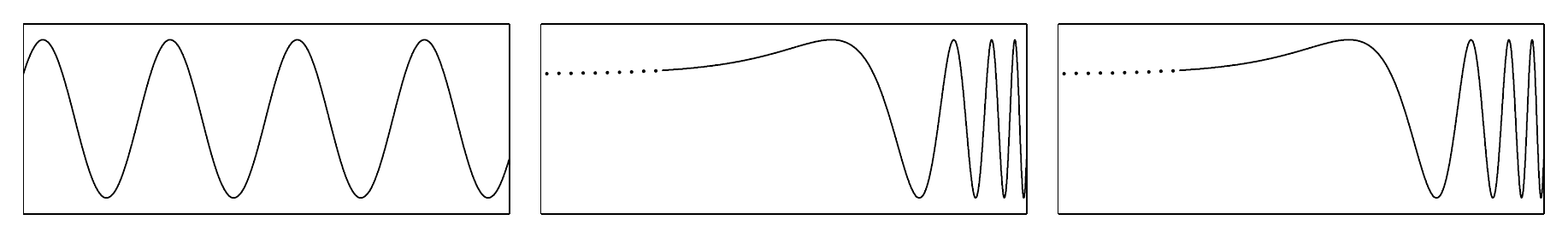}
\put(80,15){\small $\FL(y)$}
\put(47,15){\small $\F(s)$}
\put(14,15){\small $f(x)$}
\put(1,-1){\footnotesize $0$}
\put(32,-1){\footnotesize $1$}
\put(40,-1){\footnotesize $-L$}
\put(65,-1){\footnotesize $0$}
\put(73,-1){\footnotesize $-1$}
\put(98,-1){\footnotesize $1$}
\end{overpic}
\caption{\label{fig:functionsSemiInf} \small In order to approximate a given function $f(x)$ on $[0,1]$, we first transplant using the mapping $\varphi$ to a function $\F(s)$ defined on $(-\infty,0]$. Then we apply the domain-truncation strategy of scaling $\F(s)$ to a further function $\FL(y)$ in such a way that the interval $[-L,0]$ is mapped to $[-1,1]$. It is here that Chebyshev interpolation will be applied.}
\end{center}
\end{figure}

The function $\FL(y)$ will be approximated on $[-1,1]$ by the degree $n$ Chebyshev polynomial interpolant
\vspace{-1em}
\begin{align}\label{eqn:chebInterpolant}
 \Pn(y) = \sum_{k=0}^{n} c_k T_k(y).
\end{align}
As usual, $\{T_k\}$ are the first kind Chebyshev polynomials, and $c_k$ are aliased expansion coefficients corresponding to the function samples $\FL(y_j)$, where $y_j = \cos(j \pi/n)$, $j = 0,1,\ldots,n$ are the Chebyshev points of the second kind.

Since, by assumption, $f(x)$ is analytic on $(0,1]$, for any finite $L > 0$, the function $\FL(y)$ is analytic on $[-1,1]$ and therefore has an analytic continuation to a neighbourhood of the complex plane surrounding this interval. In such circumstances, the Chebyshev interpolants given by~\eqref{eqn:chebInterpolant} converge uniformly to $\FL$ geometrically as $n \to \infty$, at a rate governed by the size of the associated region of analyticity. 

More precisely, given some $\mu > 0$, we define the \emph{Bernstein ellipse}
\begin{align*}
 E_\mu = \left\{ \cos\theta\cosh\mu  +i\sin\theta\sinh\mu \, : \, \theta \in [0,2\pi) \right\}.
\end{align*}
This is an ellipse in the complex plane with foci $\pm 1$.
Whenever a function is analytic within such a region, we can apply the following result (see~\cite[Ch. 8]{TrefethenATAP}, for example):

\begin{theorem}[Bernstein]
 Let $\FL$ be analytic in the open region bounded by the ellipse $E_\mu$. Let the quantity
\begin{align}\label{eqn:ellipseMax}
 m(\mu;\FL) = \sup_{y \in E_\mu} |\FL(y)|
\end{align}
 be finite. Then, for every positive integer $n$, the Chebyshev interpolant~\eqref{eqn:chebInterpolant} satisfies
\begin{align}\label{eqn:BernsteinBound}
 \| \FL - \Pn \|^{}_{y \in [-1,1]} \leq \frac{4}{\mu} m(\mu ;\FL) e^{-\mu n}.
\end{align}
\end{theorem}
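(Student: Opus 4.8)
The plan is to reduce the interpolation problem to an analysis of Laurent coefficients via the Joukowski map $y = \tfrac{1}{2}(z + z^{-1})$, which carries the circle $|z| = \rho$ with $\rho := e^\mu$ onto the Bernstein ellipse $E_\mu$; indeed, writing $z = \rho\, e^{i\theta}$ gives $\tfrac12(z+z^{-1}) = \cosh\mu\cos\theta + i\sinh\mu\sin\theta$, which is precisely the parametrisation of $E_\mu$ appearing above. Under this map one has the identity $T_k(y) = \tfrac12(z^k + z^{-k})$, so the composition $g(z) := \FL(\tfrac12(z+z^{-1}))$ is analytic in the closed annulus $\rho^{-1} \leq |z| \leq \rho$, and its Laurent coefficients are exactly the coefficients $a_k$ of the genuine Chebyshev series $\FL(y) = \sum_{k=0}^\infty a_k T_k(y)$. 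First I would record this dictionary relating the three objects in play: the Chebyshev series of $\FL$, the Laurent series of $g$, and the interpolant $\Pn$.

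The first quantitative step is to bound the exact coefficients. Representing $a_k$ as a contour integral of $g$ over $|z| = \rho$ and using $\sup_{|z|=\rho}\abs{g(z)} = \sup_{y \in E_\mu}\abs{\FL(y)} = m(\mu;\FL)$, I would obtain the decay estimate $\abs{a_k} \leq 2\, m(\mu;\FL)\, \rho^{-k}$ for every $k \geq 1$ (with the analogous bound $\abs{a_0} \leq m(\mu;\FL)$). This is the source of the geometric factor $e^{-\mu k}$, and it is exactly where the hypotheses that $\FL$ be analytic inside $E_\mu$ and that $m(\mu;\FL)$ be finite are used.

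The heart of the argument is the relation between the interpolation coefficients $c_k$ and the exact coefficients $a_k$. Because the second-kind Chebyshev points $y_j = \cos(j\pi/n)$ satisfy $T_k(y_j) = T_{k'}(y_j)$ whenever $k' \equiv \pm k \pmod{2n}$, the discrete (aliased) coefficients obey an aliasing identity of the form $c_k = a_k + \sum_{m \geq 1}\bigl(a_{2mn-k} + a_{2mn+k}\bigr)$, valid for $0 < k < n$, with the customary factor-of-two adjustments at the boundary indices $k=0$ and $k=n$. I would then split the error as
\begin{align*}
\FL - \Pn = \sum_{k=0}^{n}(a_k - c_k)\,T_k + \sum_{k=n+1}^{\infty} a_k\, T_k,
\end{align*}
bound $\abs{T_k(y)} \leq 1$ on $[-1,1]$, and insert the coefficient estimate. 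Both the truncation tail and the aliasing contribution are then dominated by geometric series in $\rho^{-1}$, and summing them yields $\| \FL - \Pn \|^{}_{y\in[-1,1]} \leq 4\, m(\mu;\FL)\,\rho^{-n}/(\rho - 1)$. I expect the main obstacle to be purely the bookkeeping here: coaxing the aliasing sums and the truncation tail to combine into a single clean constant (the factor $4$) requires care with the endpoint indices and with the precise form of the aliasing identity, rather than any deeper idea.

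Finally, I would convert this into the stated form. Since $\rho - 1 = e^\mu - 1 \geq \mu$ for every $\mu > 0$, we have $1/(\rho-1) \leq 1/\mu$, and substituting $\rho^{-n} = e^{-\mu n}$ gives the advertised bound $\| \FL - \Pn \|^{}_{y\in[-1,1]} \leq \tfrac{4}{\mu}\, m(\mu;\FL)\, e^{-\mu n}$, which completes the proof.
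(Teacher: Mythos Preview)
The paper does not actually prove this theorem: it is stated as a classical result and simply attributed to Bernstein with a pointer to \cite[Ch.~8]{TrefethenATAP}. Your outline is correct and is essentially the standard argument found in that reference --- Joukowski lift to an annulus, Cauchy estimates giving $|a_k|\le 2\,m(\mu;\FL)\rho^{-k}$, the aliasing identity for second-kind Chebyshev points, and summation of the resulting geometric series to obtain $4\,m(\mu;\FL)\rho^{-n}/(\rho-1)$, followed by the elementary inequality $e^\mu-1\ge\mu$. There is nothing to compare against in the paper itself; your proof simply fills in what the authors chose to cite rather than reproduce.
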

The pointwise correspondence between $\FL$ and $f$ means that $\Pn$ does not provide an approximation to $f$ over the whole of $[0,1]$, but rather only over a sub-interval, $[\xL,1]$, where $\xL = \varphi^{-1}_{}(-L)$. To form an approximation over the entirety of $[0,1]$, it is necessary to additionally prescribe an approximation over the remaining region $[0,\xL)$. To this end, we define our approximation to $f$ on $[0,1]$ as follows:
\begin{align}\label{eqn:pnLphi}
 \pnL(x) = \left\{\begin{array}{cl}
  \FL(-1),                    & \quad    x \in [0,\xL),   \\
  \Pn(2\varphi(x)/L+1 ),   & \quad    x \in [\xL,1].      
\end{array}\right.
\end{align}
This is a piecewise representation. The $\infty$-norm error is the maximum of the errors over each piece, and therefore we have
\begin{align}\label{eqn:pnLphiError}    
 \| f-\pnL \| = \max\big\{ \, \|\FL - \Pn \|^{}_{y \in [-1,1]} \, , 
	   \, \| \FL-\FL(-1) \|^{}_{y \in (-\infty,-1)} \, \big\}.
\end{align}

\subsection{Unparameterised exponential map \boldmath{$\phiE$}}

Consider first the simplest possible exponential transform and its corresponding inverse:
\begin{align*}
 \phiE(x) = \log(x), \qquad \phiEi(s) = \exp(s) .
\end{align*}

\subsubsection{Convergence rate for \boldmath{$\phiE$}} We assume analyticity of $\F(s)$ in the region
\begin{align}\label{eqn:parabolicRegion}
 \P_{d}^{} = \left\{ z \in \complex : \Re(z) < \frac{d}{2} - \frac{1}{2 d} \Im(z)^2 \right\},
\end{align} 
where $d$ is a positive number. This is the open region in the complex plane bounded by the parabola with focus $z = 0$ and directrix $\Re(z) = d$. The image of $\P_{d}^{}$ under the exponential map is an infinitely sheeted Riemann surface wrapping around at $x=0$. Figure~\ref{fig:parabolicRegion} provides an illustration of these regions. 

\begin{figure}[ht]
\begin{center}
\begin{overpic}[scale=0.45]{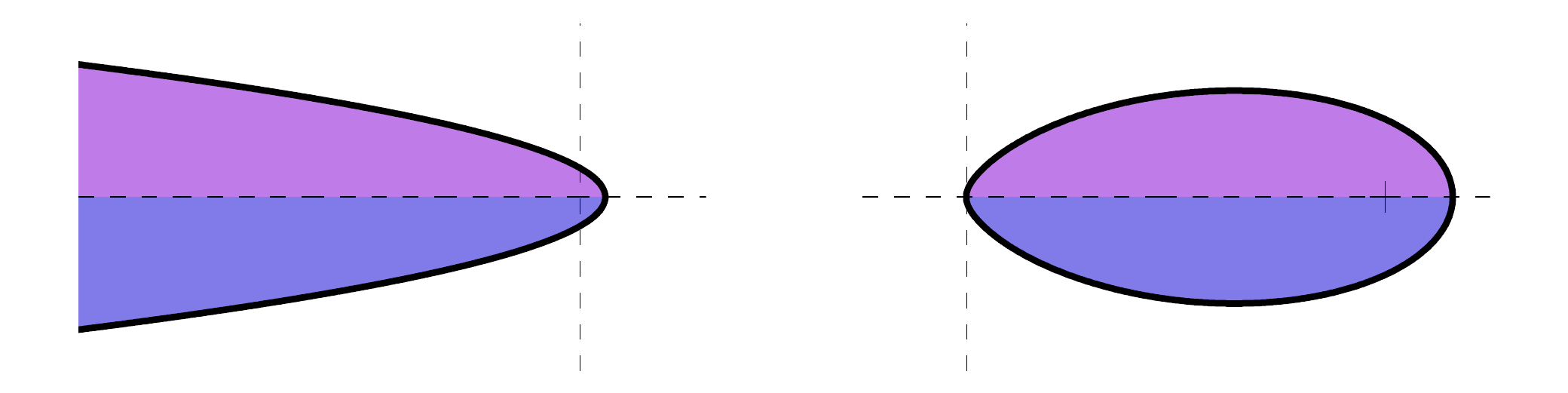}
\put(18,1){\footnotesize $s$-plane}
\put(74,1){\footnotesize $x$-plane}
\put(18,22){\footnotesize $\P_{d}^{}$}
\put(73.5,22){\footnotesize $\phiEi(\P_{d}^{})$}
\end{overpic}
\caption{\label{fig:parabolicRegion} \small The parabola $\P_{d}^{}$ and its image under the exponential map, $\phiEi(\P_{d}^{})$. The region on the right is an infinitely sheeted Riemann surface wrapping around at $x = 0$.}
\end{center}
\end{figure}

\begin{theorem}[\cite{Richardson13}, Thm.~3.4]\label{thm:phiE} Let $f(x)$ be analytic and bounded on the Riemann surface $\phiEi(\P_{d}^{})$ for some $d > 0$. Let $f$ satisfy $|f(x)-f(0)| = \order(|x|^\tau)$, as $x \to 0$ for some $\tau > 0$. Let $\pnL$ be the approximation defined by~\eqref{eqn:pnLphi} corresponding to the map $\varphi = \phiE$. If $c>0$ and  $L = c n^{2/3}$, then as $n \to \infty$,
   \begin{align}\label{thm:phiResult} 
    \|f - \pnL\| = \ordera \s \left(C^{-n^{2/3}}\right) , \qquad C = \min\left\{ \exp\left( \sqrt{2d/c} \right) , \exp\left( \tau c \right) \right\}.
   \end{align}
\end{theorem}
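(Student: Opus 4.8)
The plan is to bound the two contributions to the error identity~\eqref{eqn:pnLphiError} separately and to check that they produce the two candidate rates in the minimum defining $C$. I would first treat the tail term $\|\FL-\FL(-1)\|$ on $(-\infty,-1)$. Here $y<-1$ corresponds to $x=\phiEi(L(y-1)/2)\in(0,\xL)$ with $\xL=\phiEi(-L)=e^{-L}$, and $\FL(-1)=\F(-L)=f(\xL)$. Writing $|f(x)-f(\xL)|\le|f(x)-f(0)|+|f(\xL)-f(0)|$ and invoking the hypothesis $|f(x)-f(0)|=\order(|x|^\tau)$ together with $0<x<\xL$, both terms are $\order(\xL^\tau)=\order(e^{-\tau L})$. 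With $L=cn^{2/3}$ this gives a tail contribution $\order(e^{-\tau c n^{2/3}})$, i.e.\ the factor $\exp(\tau c)$.

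The crux is the Chebyshev term $\|\FL-\Pn\|$, to which I would apply Bernstein's bound~\eqref{eqn:BernsteinBound}. Since $\F(s)=f(e^s)$ is analytic and bounded on $\P_d$ (the Riemann-surface lift resolving the multivaluedness of $\exp$), and $\FL(y)=\F(L(y-1)/2)$, the real affine map $y\mapsto s=L(y-1)/2$ carries the Bernstein ellipse $E_\mu$ onto an ellipse in the $s$-plane centred at $-L/2$ with horizontal semi-axis $\tfrac{L}{2}\cosh\mu$ and vertical semi-axis $\tfrac{L}{2}\sinh\mu$. The key geometric step is to determine the largest $\mu$ for which this image lies inside the parabolic region~\eqref{eqn:parabolicRegion}. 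Parametrising the ellipse boundary by $\theta$ and setting $u=\cos\theta$, the containment condition becomes $\max_u\big[\Re(s)+\tfrac{1}{2d}\Im(s)^2\big]<\tfrac{d}{2}$, whose left-hand side is a downward parabola in $u$. For $\mu$ in the relevant range the maximiser lies beyond $u=1$, so the binding constraint is the rightmost point $u=1$, giving $\tfrac{L}{2}(\cosh\mu-1)<\tfrac{d}{2}$, i.e.\ $\cosh\mu<1+d/L$ and hence $\mu\sim\sqrt{2d/L}$ as $L\to\infty$.

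With $L=cn^{2/3}$ this is $\mu\sim\sqrt{2d/c}\,n^{-1/3}$, so that $e^{-\mu n}\sim e^{-\sqrt{2d/c}\,n^{2/3}}$, producing the factor $\exp(\sqrt{2d/c})$. The prefactor $4/\mu=\order(n^{1/3})$ and the ellipse maximum~\eqref{eqn:ellipseMax} satisfies $m(\mu;\FL)\le\sup_{\P_d}|f|=\order(1)$ by boundedness of $f$; both are absorbed by the $\ordera$ notation, which permits algebraic factors. Taking the maximum of the two contributions in~\eqref{eqn:pnLphiError} then yields $\|f-\pnL\|=\ordera(C^{-n^{2/3}})$ with $C=\min\{\exp(\sqrt{2d/c}),\exp(\tau c)\}$.

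I expect the main obstacle to be the ellipse-in-parabola fitting, where two points need care. First, one must verify that the rightmost point, rather than the top or bottom of the ellipse, is the binding constraint; this is precisely what the $u$-maximisation establishes. Second, at $\mu=\sqrt{2d/L}$ the ellipse touches $\partial\P_d$, so $E_\mu$ is not strictly interior and $m(\mu;\FL)$ is then a supremum over a curve meeting the boundary; I would remedy this by taking the $\mu$ associated with a slightly smaller $d'<d$, so that $E_\mu\subset\P_{d'}\subset\P_d$ and $m(\mu;\FL)$ stays uniformly bounded, and then letting $d'\to d^-$, the loss being harmless within the polynomial slack of $\ordera$.
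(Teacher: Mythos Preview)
Your proposal is correct. Note that the paper does not actually prove this theorem, instead citing it from~\cite{Richardson13}. However, the paper does prove the closely analogous Theorem~\ref{thm:phiS_alphaFixed}, and there the argument runs slightly differently: rather than fitting the image ellipse inside $\P_{d}^{}$ geometrically, the paper posits a ``limiting singularity'' of $\F$ at some $s_1+is_2\in\partial\P_{d}^{}$, uses $d=s_1+\sqrt{s_1^2+s_2^2}$, and then determines via~\eqref{eqn:muParabolaEllipse} the Bernstein parameter of the ellipse passing through the corresponding $y$-plane point, obtaining the same asymptotic $\mu\sim\sqrt{2d/L}$. Your direct ellipse-in-parabola calculation, locating the binding constraint at the rightmost point $u=1$, is more self-contained and better matched to the stated hypothesis, which only guarantees analyticity throughout $\P_{d}^{}$ and does not single out any boundary singularity; it also makes transparent why the answer is independent of where on $\partial\P_{d}^{}$ a singularity might sit. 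One minor slip: where you write $\sup_{\P_{d}^{}}|f|$ you mean $\sup_{\P_{d}^{}}|\F|$ (equivalently $\sup_{\phiEi(\P_{d}^{})}|f|$); the argument is unaffected.
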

Note that the choice $L = c n_{}^{2/3}$ in this theorem is made to ensure that the two terms in the error expression~\eqref{eqn:pnLphiError} decay at the same rate.  Allowing $L$ to scale either faster or slower than $n^{2/3}$ as $n \rightarrow \infty$ would lead to slower exponential convergence.

\subsubsection{Resolution analysis of $\phiE$}

Our strategy for analysing the resolution properties of $\phiE$ and the other maps in this paper is as follows. We first estimate the quantity~\eqref{eqn:ellipseMax} by analysing the maximal behaviour of the function $f(x) = e_{}^{2\pi i \omega x}$ on the Bernstein ellipse $E^{}_{\mu}$ for large $| \omega |$.  We then use this to determine the smallest value of $n$ for which exponential convergence of the right-hand side of~\eqref{eqn:BernsteinBound} occurs.

To proceed, we set $f(x) = e^{2\pi i \omega x}$  and consider the approximation to $f$ defined by~\eqref{eqn:pnLphi}, where $\varphi = \phiE$. The function $\FL$ in this case is given by
\begin{align*}
 \FL(y) 
 = \exp\left( 2 \pi i \omega \phiEi\left( \frac{L}{2} (y-1) \right) \right)
 = \exp\left( 2 \pi i \omega \exp\left( \frac{L}{2} (y-1) \right) \right).
\end{align*}
Setting $y = \cos\theta\cosh\mu  +i\sin\theta\sinh\mu$, for $\theta \in [0,2\pi)$, we obtain
\begin{align*}
\FL(y) 
&= \exp\left( 2 \pi i \omega \exp\left( \frac{L}{2} (\cos\theta\cosh\mu-1) \right) \exp\left( i \frac{L}{2} \sin\theta\sinh\mu \right) \right),
\end{align*}
from which it follows that 
\begin{align*}
 |\FL(y)| = \exp(-2\pi \omega H(\theta,\mu,L)), 
\end{align*}
where
\vspace{-1em}

\begin{align*}
 H(\theta,\mu, L) = \exp\left( \frac{L}{2} (\cos\theta\cosh\mu-1) \right) \sin\left( \frac{L}{2} \sin\theta\sinh\mu \right).
\end{align*}
Thus, given $\mu$ and $L$, it is clear that that maximising the quantity $|\FL(y)|$ over $E_\mu$ corresponds to minimising the function $H(\theta,\mu,L)$ over $\theta \in [0,2\pi)$.

Our interest lies in the asymptotic regime $\omega \rightarrow \infty$.  As $\omega$ increases, one would expect the parameter $n$ also to increase, since more degrees of freedom are generally required to resolve oscillatory functions.  Since the exponential map requires large values of $L$ to achieve good accuracy, it is therefore reasonable to consider asymptotic expansions in $L \rightarrow \infty$.  Since we are free to choose $\mu > 0$ in~\eqref{eqn:BernsteinBound}, we shall simultaneously let $\mu \rightarrow 0$ as $L \rightarrow \infty$ in such a way that $L \mu \rightarrow 0$. Doing this gives
\begin{align*}
 H(\theta,\mu,L) \sim \exp\left( \frac{L}{2} (\cos\theta-1)\right) \frac{1}{2} L \mu \sin \theta = \frac{1}{2} L \mu K(\theta),
\end{align*}
where
\vspace{-1em}
\begin{align*}
 K(\theta) = \exp\left( \frac{L}{2} (\cos\theta-1)\right) \sin \theta.
\end{align*}
It follows that
\vspace{-1em}
\begin{align*}
 \frac{{\rm \partial}}{{\rm \partial} \theta} H(\theta,\mu,L) \sim 0 \quad \Leftrightarrow \quad \frac{{\rm d}}{{\rm d} \theta} K(\theta) = 0.  
\end{align*}
This last equation can be solved explicitly for $\theta$ in terms of $L$ to give 
\begin{align*}
 \theta = \cos^{-1} \left(\frac{-1 + \sqrt{1 + L^2} }{ L } \right).
\end{align*}
There are precisely two permissible values of $\theta$ in the interval $[0,2\pi)$: One lies in $[0,\pi)$, the other in $[\pi,2\pi)$. Let the first of these be $\theta^+$. Then, since $H(\theta,\mu, L)$ is antisymmetric about $\theta = \pi$, satisfying $H(\eta,\mu, L) = - H(2\pi-\eta,\mu, L)$ for $\eta \in [0,\pi)$, the other is $\theta^* = 2\pi - \theta^+$. One can easily check that $\theta^+$ corresponds to maximum of $K(\theta)$ and $\theta^*$ to the minimum. One can also easily show that
\begin{align*}
 \theta^* \sim 2\pi - \sqrt{\frac{2}{L}}, \qquad L \to \infty,
\end{align*}
so that for all $\theta \in [0,2\pi)$ we have
\begin{align*}
 K(\theta) \geq K(\theta^*) \sim - \sqrt{\frac{2}{e L}},\qquad L \to \infty.
\end{align*}
Thus,~\eqref{eqn:ellipseMax} reads
\vspace{-1em}
\begin{align*}
 m(\mu;\FL) \sim \exp\left( \mu \omega \sqrt{\frac{2 \pi^2 L}{e}} \right), \qquad L \to \infty.
\end{align*}
Combining this together with~\eqref{eqn:BernsteinBound}, as $\mu \to 0$, $L \to \infty$, $L \mu \to 0$, we have
\begin{align*}
  \| \FL - \Pn \| 
    \leq \frac{4}{\mu} \exp\left( \mu \omega \sqrt{\frac{2 \pi^2 L}{e}}  - \mu n \right).
\end{align*}
A sufficient condition for convergence is therefore  
\begin{align*}
 n \geq  \sqrt{ \frac{2  L}{e}} \pi \omega.
\end{align*}

Lastly, in order to obtain the $\ordera(C^{-n^{2/3}})$ convergence rate, we set $L =cn^{2/3}$, as prescribed in Theorem~\ref{thm:phiE}. We have thus proven the following result:

\begin{theorem}
Let $\R(\omega;\delta)$ be given by~\eqref{eqn:epsilonResolution} where $\{\Psi^{(n)}\}_{n \in \natural}$ is the approximation scheme defined by~\eqref{eqn:pnLphi} corresponding to the map $\varphi = \phiE$. Let $L = cn^{2/3}$ for any $c > 0$. Then, 
\vspace{-1em}
\begin{align}\label{eqn:resolutionFigure_phiE}
 \limsup_{\delta \to 1^{-}} \limsup_{|\omega| \to \infty} \frac{\R(\omega;\delta)}{|\omega|^\frac{3}{2}} \leq \pi^\frac{3}{2} \left(\frac{2 c}{e}\right)^\frac{3}{4}.
\end{align}
\end{theorem}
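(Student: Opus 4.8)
The groundwork is already laid: for $f(x)=e^{2\pi i\omega x}$ transplanted under $\varphi=\phiE$, the preceding analysis bounds the interpolation piece of the error by
\[
 \|\FL-\Pn\| \leq \frac{4}{\mu}\exp\!\left(\mu\omega\sqrt{\tfrac{2\pi^2 L}{e}}-\mu n\right)
\]
in the regime $\mu\to 0$, $L\to\infty$, $L\mu\to 0$, so that this term decays once $n$ exceeds the threshold $\pi\omega\sqrt{2L/e}$. My plan is to (i) substitute $L=cn^{2/3}$ and solve for the self-consistent threshold in $\omega$; (ii) turn this into a genuine upper bound on $\R(\omega;\delta)$ by calibrating $\mu$; (iii) check that the extension piece of~\eqref{eqn:pnLphiError} is negligible; and (iv) take the iterated limits.

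For (i), substituting $L=cn^{2/3}$ into $n\geq\pi\omega\sqrt{2L/e}$ gives $n^{2/3}\geq\pi\omega(2c/e)^{1/2}$, hence the threshold $n^\star(\omega):=\pi^{3/2}(2c/e)^{3/4}\omega^{3/2}$. For (ii) I would fix a slack $\gamma>0$, set $n=\lceil(1+\gamma)\,n^\star(\omega)\rceil$, and choose $\mu=\mu(\omega)\to 0$ slowly enough that $L\mu\to 0$ (say $\mu\asymp 1/(L\log L)$, which gives $L\mu\asymp 1/\log L\to 0$). At this threshold $L\asymp\omega$, the exponent $\mu(\pi\omega\sqrt{2L/e}-n)$ is $\asymp -\omega^{1/2}/\log\omega\to-\infty$, and this decay dominates the polynomial prefactor $4/\mu\asymp\omega\log\omega$; hence the interpolation error tends to $0$ as $\omega\to\infty$ and in particular falls below any $\delta<1$ for $\omega$ large.

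For (iii), on the real axis $\FL(y)=\exp(2\pi i\omega\,e^{L(y-1)/2})$ has modulus one, and for $y<-1$ the phase lies in $(0,2\pi\omega e^{-L}]$, so $|\FL(y)-\FL(-1)|\leq 2\pi\omega e^{-L}$. With $L=cn^{2/3}$ at the threshold, $e^{-L}=e^{-c\,n^{2/3}}$ decays like $e^{-c\pi(2c/e)^{1/2}\omega}$, so the extension term is exponentially small in $\omega$ and dominated by the interpolation term. Feeding both pieces into~\eqref{eqn:pnLphiError}, for $\omega$ large the full error is $<\delta$ at $n=\lceil(1+\gamma)n^\star(\omega)\rceil$, so $\R(\omega;\delta)\leq(1+\gamma)n^\star(\omega)+1$. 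For (iv), dividing by $\omega^{3/2}$ and letting $\omega\to\infty$, then $\delta\to1^-$, then $\gamma\to0$, yields~\eqref{eqn:resolutionFigure_phiE}.

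The delicate step is the calibration in (ii): the asymptotic expansion of $H(\theta,\mu,L)$ is valid only when $L\mu\to 0$, whereas the prefactor $4/\mu$ blows up as $\mu\to 0$, so one must exhibit an admissible window of $\mu$ in which the exponential decay in $\omega$ genuinely beats the prefactor. The double $\limsup$ is engineered to absorb exactly these sub-leading factors, and the real content of the argument is verifying that they wash out and leave only the leading constant $\pi^{3/2}(2c/e)^{3/4}$.
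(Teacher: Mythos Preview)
Your proposal is correct and follows essentially the same approach as the paper: the paper derives the resolution criterion $n \geq \pi\omega\sqrt{2L/e}$ from the asymptotic estimate of $m(\mu;\FL)$ and then simply substitutes $L = cn^{2/3}$ to conclude. Your steps (ii)--(iv)---calibrating $\mu$, bounding the extension piece, and taking the iterated limits---are details the paper leaves implicit, so your argument is in fact more careful than the paper's own treatment.
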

A numerical verification of this result is given in Figure~\ref{fig:numericalExperiments_phiE}. We note that the bound~\eqref{eqn:resolutionFigure_phiE} appears to be sharp in practice.

\begin{figure}[ht]
\begin{center}
\begin{overpic}[scale=0.55]{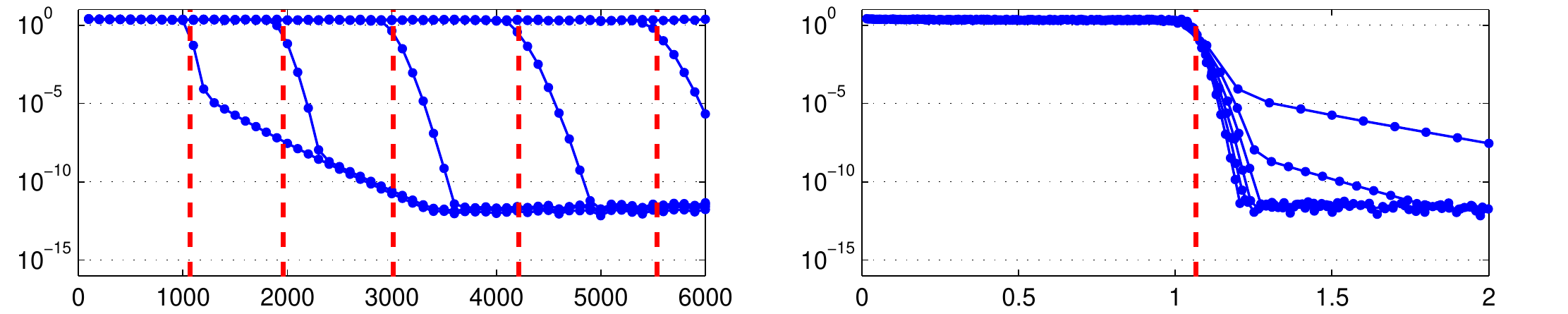}
 \put(25,-3) {\footnotesize $n$}
 \put(73,-3) {\footnotesize $n/\omega^\frac{3}{2}$}
\end{overpic}
\vspace{0.5em}
\caption{\label{fig:numericalExperiments_phiE} \small The error $\|f - \pnL\|$ in approximating $f(x) = e^{2 \pi i \omega x}$ on $[0,1]$ using~\eqref{eqn:pnLphi} in conjunction with the map $\varphi = \phiE$ for $\omega = 100, 150, \ldots, 350$. In each case, we use $c = 0.15$ in the relationship $L = c n^{2/3}$. In the left pane, for each $\omega$, we display the error against $n$. In the right pane, we show the same data, but with the $x$-axis rescaled by a factor of $\omega^{3/2}$. The dashed red lines indicate the theoretical resolution figure ($\approx1.066 \omega^{3/2}$) given by~\eqref{eqn:resolutionFigure_phiE}  corresponding to this particular choice of $c$. }
\end{center}
\end{figure}

In summary, the numerical method based upon the transform $\phiE$ converges at the rate $\ordera(C^{-n^{2/3}})$, requiring $\order(\sqrt{\omega})$ points per wavelength, and therefore $\order(\omega^{3/2})$ points in total, to resolve the complex exponential $e^{2 \pi i \omega x}$.

Given~\eqref{eqn:resolutionFigure_phiE}, one may be led to conclude that $c$ should be set as small as possible so as to mitigate the effect of suboptimal resolution power. However, whilst doing this will indeed improve the resolution, it will also worsen the convergence. Indeed, from~\eqref{thm:phiResult} it is clear that $C \sim 1 + \order(c)$ as $c \to 0$.

\subsection{Parameterised exponential map $\phiS$} So far, we have seen that numerical methods based upon the map $\phiE$ lead to approximations which require $\order(\omega^{3/2})$ points to begin resolving resolve the complex exponential $e^{2\pi i \omega x}$. In this section, we shall derive a new transformation involving a user-specified parameter $\alpha$ and show that it is possible to choose this parameter in such a way that the resulting numerical method requires just  $\order(\omega)$ points to resolve $e^{2\pi i \omega x}$.

We denote by $\S_\alpha^{}$ the open infinite strip of half-width $\alpha$ defined by
\begin{align}\label{eqn:infiniteStrip}
 \S_\alpha^{} = \left\{ z \in \complex : |\Im(z)| < \alpha \right\}.
\end{align}
For comparative purposes, consider the action of the unparameterised exponential map $\phiEi$ upon this strip. For any $0 < \alpha < \pi$, $\phiEi$ is an analytic function throughout $\S_\alpha^{}$, mapping it in a one-to-one fashion onto the wedge-shaped region formed by two straight lines meeting with half-angle $\alpha$ at the origin; see Figure~\ref{fig:stripToWedge}. Our derivation of the new map $\phiSi$ is motivated by the notion that a wedge-shaped region is undesirable since it requires ``more analyticity'' of the function near $x=1$ than near $x=0$.

\begin{figure}[ht]
\begin{center}
\begin{overpic}[scale=0.45]{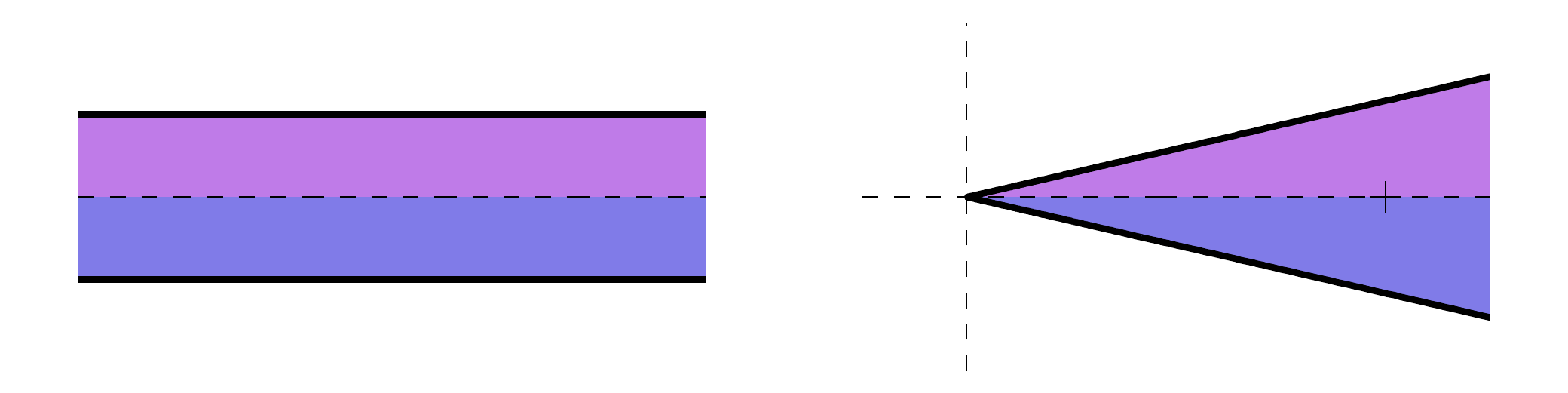}
\put(18,1){\footnotesize $s$-plane}
\put(73,1){\footnotesize $x$-plane}
\put(87.8,8.5){\footnotesize $1$}
\put(21,20){\footnotesize $\S_\alpha^{}$}
\put(73,20){\footnotesize $\phiEi(\S_\alpha^{})$}
\put(1,17.2){\footnotesize $i\alpha$}
\put(-1.5,6){\footnotesize $-i\alpha$}
\end{overpic}
\caption{\label{fig:stripToWedge} \small For any $\alpha < \pi$, $\phiEi$ maps the infinite strip of half-width $\alpha$ onto the wedge-shaped region formed by two straight lines meeting with half-angle $\alpha$ at $x=0$.}
\end{center}
\end{figure}

We intend to construct a transform which maps $\S_\alpha^{}$ onto an infinite strip of the same width, but with a slit on the negative real axis, $(-\infty,0]$. We refer to this region as a \emph{slit strip of half-width $\alpha$}; see the top-right pane of Figure~\ref{fig:slitToSlitStrip}. 

This can be achieved as follows. First, we use the map
\begin{align*}
 \varphi_1^{-1}(s;\alpha)  = i \exp\left( \frac{\pi s}{2\alpha} \right),
\end{align*}
to take us from $\S^{}_{\alpha}$ to the half-plane. 
Next, we utilise the Schwarz-Christoffel formula to construct a map $\varphi_2^{-1}(t)$ which takes us from the half-plane to the slit-strip. For more on Schwarz-Christoffel mapping, see~\cite{SCmapping}; for some related articles containing similar themes to this part of our investigation, see~\cite{HaleTee09,HaleTrefethen08,HowellTrefethen90}. 

One can view the destination region in the $x$-variable as a polygon with one finite and three infinite vertices. The two steps described above are illustrated in Figure~\ref{fig:slitToSlitStrip}.

\begin{figure}[ht]
\begin{center}
\begin{overpic}[scale=0.45]{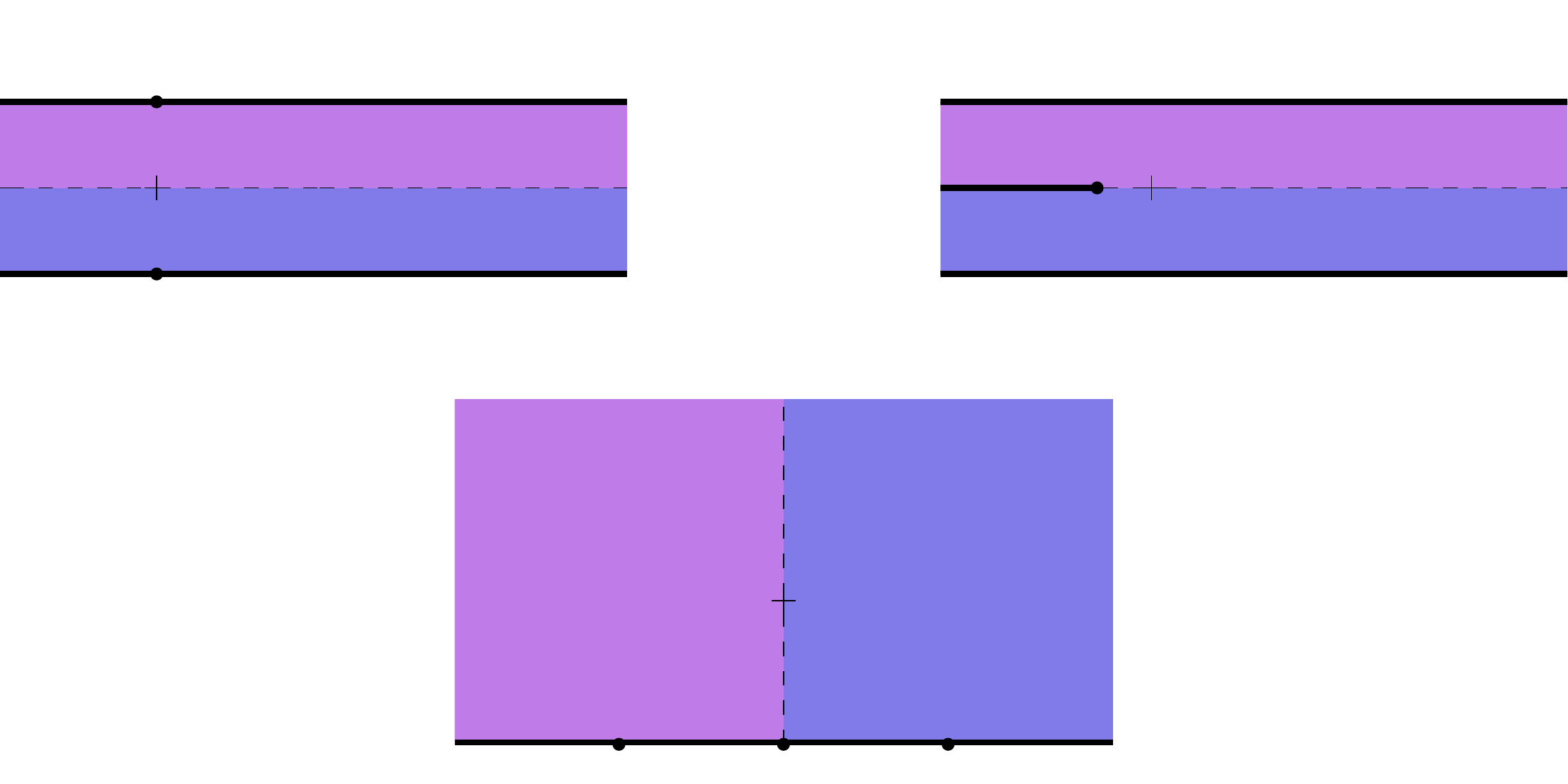}
 \put(18,22) {\Large $\searrow$}
 \put(8.5,21) {\footnotesize $\varphi_1^{-1}(s)$}
 \put(78,22) {\Large $\nearrow$}
 \put(84,21) {\footnotesize $\varphi_2^{-1}(t)$}
 \put(5.5,45) {\scriptsize $s_1 = i \alpha$}
 \put(-8,37.3) {\scriptsize $s_2 \ee \infty$}
 \put(5.5,30) {\scriptsize $s_3 = -i \alpha$}
 \put(40.6,37.3) {\scriptsize $s_4 \ee \infty$}
 \put(25,45.5) {\footnotesize $\S^{}_\alpha$}
 \put(35.5,-0.5) {\scriptsize $t_1 \ee -1$}
 \put(46.6,-0.5) {\scriptsize $t_2 \ee 0$}
 \put(57,-0.5) {\scriptsize $t_3 \ee 1$}
 \put(46.5,25.5) {\scriptsize $t_4 \ee \infty$}
 \put(51.5,40) {\scriptsize $x_1 \ee \infty$}
 \put(66,35) {\scriptsize $x_2 \ee 0$}
 \put(51.5,34.9) {\scriptsize $x_3 \ee \infty$}
 \put(101,37.3) {\scriptsize $x_4 \ee \infty$}
 \put(75,45.5) {\footnotesize $\phiSi(\S^{}_\alpha;\alpha$)}
\end{overpic}
\caption{\label{fig:slitToSlitStrip} \small Construction
via Schwarz-Christoffel mapping of the transformation 
$\phiSi$ which maps the infinite strip of half-width $\alpha$ 
to the infinite slit-strip of half-width $\alpha$. Note the uniform analyticity requirement of $\phiSi(\S^{}_\alpha;\alpha)$ in contrast to the region $\phiEi(\S^{}_{\alpha})$ from Figure~\ref{fig:stripToWedge}.}
\end{center}
\end{figure}
For the Schwarz-Christoffel step, the prevertices $t_j$, vertices $x_j$ and angles $\delta_j$ are
\begin{table}[ht] 
\vspace{-0.5em} 
\centering 
\renewcommand{\arraystretch}{1} 
\setlength{\tabcolsep}{15pt}
\begin{tabular}{lll}
$t_1 = -1$,                   & $x_1 = \infty$,     &  $\delta_1 = 0$, \\
$t_2 = 0$,                    & $x_2 = 0$,          &  $\delta_2 = 2$, \\
$t_3 = 1$,                    & $x_3 = \infty$,     &  $\delta_3 = 0$, \\
$t_4 = \infty$,               & $x_4 = \infty$,     &  $\delta_4 = 0$.   
\end{tabular}
\vspace{-0.5em} 
\end{table}
\noindent

The Schwarz-Christoffel integral can be evaluated exactly. We have
\begin{align*}
 \varphi_2^{-1}(t)
 = \tilde{B} + \tilde{C} \int^{t} (\xi + 1)^{-1}\xi(\xi - 1)^{-1} {\rm d} \xi
 = \tilde{B} + C \log(t^2-1),
\end{align*}
for some constants $\tilde{B}, \tilde{C}, C$. Composing the two maps gives 
\begin{align*}
 \tilde{\varphi}^{-1}_\text{\tiny S}(s;\alpha) = \varphi_2^{-1} \circ \varphi_1^{-1}(s;\alpha) 
    = B + C \log\left(1 + e^{\pi s/\alpha}\right),
\end{align*}
for some $B$. To determine the constants, we enforce the two conditions
\begin{align*}
 \mbox{i) \,}  \lim_{s \to -\infty} \tilde{\varphi}^{-1}_\text{\tiny S}(s;\alpha) = 0, 
 \qquad \mbox{ii) \,} \Im(\tilde{\varphi}^{-1}_\text{\tiny S}(c + i \alpha;\alpha)) = \alpha,
\end{align*}
where $c \in \real$. This gives $B = 0$ and $C = \alpha/\pi$, respectively. Thus, we have
\begin{align*}
 \tilde{\varphi}_\text{\tiny S}(x;\alpha) = \frac{\alpha}{\pi} \log\left( e^{\pi x/\alpha} - 1 \right), \qquad \tilde{\varphi}^{-1}_\text{\tiny S}(s;\alpha) = \frac{\alpha}{\pi} \log\left( 1 + e^{\pi s/\alpha} \right).
\end{align*}
Finally, we shift these functions by the quantity
\begin{align}\label{eqn:gamma}
 \gamma = \tilde{\varphi}_\text{\tiny S}(1;\alpha) = \frac{\alpha}{\pi} \log\left( e^{\pi/\alpha} - 1 \right), 
\end{align}
in order to obtain
\begin{align*}
 \phiS(x;\alpha) = \frac{\alpha}{\pi} \log\left( e^{\pi x/\alpha} - 1 \right) - \gamma, \quad \phiSi(s;\alpha) = \frac{\alpha}{\pi} \log\left( 1 + e^{\pi (s+\gamma)/\alpha} \right).
\end{align*}
This shift ensures that, for any $\alpha > 0$, $\phiS(\,\cdot\,;\alpha): (0,1] \mapsto (-\infty,0]$, which is a property consistent with the action of the map $\phiE(x) = \log(x)$. We note that $\gamma \to 1$ as $\alpha \to 0$.

\subsection{Convergence analysis of $\phiS$}\label{ss:conv_phiS}
As detailed in Theorem~\ref{thm:phiE}, the numerical method based on $\phiE$ involves just a single user-specified \emph{domain-truncation} parameter $L$. However, the numerical method based on $\phiS$ involves a further \emph{strip-width} parameter $\alpha$, in addition to $L$. A consequence of this, as we shall now see, is that it is possible to obtain convergence of our approximations by two different approaches.  The first is to fix $\alpha$, and as we did with the map $\phiE$, allow $L$ to increase as a function of $n$.  However, this also leads to the poor resolution properties associated with $\phiE$.  The second approach involves keeping $L$ bounded and letting $\alpha$ tend to zero as $n$ increases. As we shall show, this leads to vastly superior resolution properties.  Specifically, $\order(\omega)$ points are required to resolve $e^{2\pi i \omega x}$, as opposed to $\order(\omega^{3/2})$.

The following result describes convergence in the first of these situations:

\begin{theorem}\label{thm:phiS_alphaFixed} Let $f$ be analytic and bounded on the Riemann surface $\phiSi(\P_{d}^{};\alpha)$ where $\alpha > 0$ is fixed, and $0 < d \leq -\gamma + \sqrt{\gamma^2 + \alpha^2}$. Let $f$ satisfy $|f(x)-f(0)| = \order(|x|^\tau)$ as $x \to 0$ for some $\tau > 0$. Let $\pnL$ denote the approximation defined by~\eqref{eqn:pnLphi} corresponding to the map $\varphi = \phiS(\,\cdot\,;\alpha)$. Then for any $c > 0$, the choice $L = cn^{2/3}$ minimises the rate of decay of the error as $n \to \infty$, in which case we have
\begin{align}\label{thm:phiS_alphaFixedResult} 
    \|f - \pnL\| = \ordera \s \left(C^{-n^{2/3}}\right) ,\qquad C = \min \left \{ \exp \left ( \s \sqrt{ 2d/c } \, \right ) , \exp \left ( \frac{\pi \tau c}{\alpha} \right ) \right \}.
   \end{align}
\end{theorem}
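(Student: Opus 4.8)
The plan is to bound the two contributions to the error in the decomposition~\eqref{eqn:pnLphiError} separately, exactly as in the proof of Theorem~\ref{thm:phiE}, and to track how the parameter $\alpha$ enters each. The starting observation is that the transplant $\F(s) = f(\phiSi(s;\alpha))$ is analytic precisely on the parabolic region $\P_d$, the \emph{same} region as in the unparameterised case. Indeed, $\phiSi(\,\cdot\,;\alpha)$ is analytic on $\P_d$ provided the parabola avoids the logarithmic branch points of $s \mapsto \log(1 + e^{\pi(s+\gamma)/\alpha})$, which sit at $s = -\gamma + i\alpha(2k+1)$, $k \in \integer$; the pair nearest the real axis is $-\gamma \pm i\alpha$. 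A short computation shows that $-\gamma \pm i\alpha \notin \P_d$ is equivalent to $-\gamma \geq d/2 - \alpha^2/(2d)$, i.e.\ to $d \leq -\gamma + \sqrt{\gamma^2 + \alpha^2}$, which is exactly the hypothesis on $d$. Thus under this hypothesis $\F$ is analytic and, by the boundedness assumption on $f$, bounded on $\P_d$.

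Because the region of analyticity in the $s$-plane is identical to that of Theorem~\ref{thm:phiE}, the first (Bernstein) term is handled by an identical argument. Writing $s = L(y-1)/2$, the condition that the Bernstein ellipse $E_\mu$ lie inside the preimage of $\P_d$ becomes $\cos\theta\cosh\mu < 1 + d/L - \frac{L}{4d}\sin^2\theta\sinh^2\mu$ for all $\theta$; the binding constraint is near $\theta = 0$ and yields the largest admissible half-width $\mu \sim \sqrt{2d/L}$ as $L \to \infty$. Since $m(\mu;\FL)$ is bounded by $\sup|f|$ on the surface, Bernstein's bound~\eqref{eqn:BernsteinBound} gives $\|\FL - \Pn\| = \ordera(\exp(-\sqrt{2d/L}\,n))$, the prefactor $4/\mu \sim n^{1/3}$ contributing only an algebraic factor absorbed by $\ordera$.

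For the second (tail) term I would exploit the endpoint decay hypothesis. For real $s \to -\infty$ one has $x = \phiSi(s;\alpha) = \frac{\alpha}{\pi}\log(1 + e^{\pi(s+\gamma)/\alpha}) \sim \frac{\alpha}{\pi}e^{\pi\gamma/\alpha}e^{\pi s/\alpha}$, so $|f(x) - f(0)| = \order(|x|^\tau) = \order(e^{\pi\tau s/\alpha})$. Since this is increasing in $s$, the supremum over $y \in (-\infty,-1)$, equivalently over $s \leq -L$, is attained at $s = -L$, giving $\|\FL - \FL(-1)\| = \order(e^{-\pi\tau L/\alpha})$. This is the one place where $\alpha$ materially changes the estimate relative to $\phiE$, replacing the exponent $\tau$ by $\pi\tau/\alpha$.

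Finally I would combine the two bounds. The overall exponential rate is governed by $\min\{\sqrt{2d/L}\,n,\ \pi\tau L/\alpha\}$; balancing the two contributions (the first decreasing, the second increasing in $L$) forces $L \propto n^{2/3}$, and the choice $L = cn^{2/3}$ makes both exponents scale like $n^{2/3}$, the fastest achievable, yielding $\|f - \pnL\| = \ordera(C^{-n^{2/3}})$ with $C = \min\{\exp(\sqrt{2d/c}),\exp(\pi\tau c/\alpha)\}$ as claimed. The main obstacle I anticipate is not the analysis of either error term, since both closely parallel the $\phiE$ case, but rather the geometric bookkeeping of the first paragraph: confirming that the constraint on $d$ keeps $\P_d$ clear of the branch points of $\phiSi$, so that the composition $f \circ \phiSi$ is genuinely analytic and bounded throughout $\P_d$ and the Bernstein machinery applies unchanged.
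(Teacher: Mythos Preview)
Your proposal is correct and follows essentially the same route as the paper: both split the error via~\eqref{eqn:pnLphiError}, obtain $\mu \sim \sqrt{2d/L}$ for the Bernstein term from the parabolic analyticity region, estimate the tail via $\phiSi(-L) \sim (\alpha/\pi)e^{\pi(\gamma-L)/\alpha}$, and then balance to get $L = c n^{2/3}$. The only differences are cosmetic: the paper computes $\mu$ from the formula~\eqref{eqn:muParabolaEllipse} applied to a generic boundary singularity rather than by your direct ellipse-in-parabola fitting near $\theta=0$, and it does not spell out the branch-point justification for the constraint $d \le -\gamma + \sqrt{\gamma^2+\alpha^2}$ that you supply.
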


\begin{proof} We first claim that if $L \to \infty$ as $n \to \infty$, then
\begin{align}\label{thm:phiS_alphaFixedResult1} 
    \|f - \pnL\| = \order \left( \max \left \{ \sqrt{L}\exp\left(- \sqrt{2d/L} \, n \right) , \exp\left(\frac{\pi \tau}{\alpha}(\gamma-L)\right) \right \} \right).
\end{align}
This can be seen as follows. First we let $\s s^{}_{1} + is^{}_{2}$ denote the location of the limiting singularity of the transplant $\F(s;\alpha) = f(\phiSi(s;\alpha))$. Since this is assumed to lie on the boundary of $\P^{}_d$, we see using~\eqref{eqn:parabolicRegion} that $d = s^{}_{1} + \sqrt{s^{2}_{1}+s^{2}_{2}}$.
Consider next the convergence rate of Chebyshev interpolants to the the function  $\FL(y;\alpha) = \F(L(y-1)/2;\alpha)$. This can be determined by setting
\begin{displaymath}
\cos \theta \cosh \mu + i \sin \theta \sinh \mu = y^{}_{1} + i y^{}_{2},
\end{displaymath}
where $y^{}_{1} = 2s^{}_{1}/L + 1$ and $y^{}_{2} = 2s^{}_{2}/L$. Upon eliminating $\theta$, one finds that 
\begin{equation}\label{eqn:muParabolaEllipse}
 \sinh^{2}_{} \mu =  \frac{1}{2} \left( y_{1}^{2} + y_{2}^{2} - 1 +\sqrt{ (1 - y_{1}^{2} - y_{2}^{2})^2 + 4y_{2}^{2} } \right).
\end{equation}
A straightforward calculation then shows that, as $L \to \infty$, 
\begin{align*}
 \mu \sim \sqrt{2d/L}.
\end{align*}
Using~\eqref{eqn:BernsteinBound}, it therefore follows that 
\begin{align*}
 \| \FL - \pnL \|^{}_{y \in [-1,1]} = \order\left( \sqrt{L} \exp\left( \sqrt{2d/L} n \right) \right), \qquad L \to \infty.
\end{align*}
This gives the first error term in~\eqref{thm:phiS_alphaFixedResult1}. To obtain the second term, we observe that
\begin{align*}
\xL = \phiSi(-L) = \frac{\alpha}{\pi} \log \left ( 1 + e^{\pi(-L+\gamma)/\alpha} \right ) \sim \frac{\alpha}{\pi} e^{\pi (-L+\gamma)/\alpha},\qquad L \rightarrow \infty.
\end{align*}
combining this together with our assumptions on $f$ gives 
$$
\| \FL - \FL(-1) \|^{}_{y \in (-\infty,-1)} = \order\left( \exp\left(\frac{\pi \tau}{\alpha}(\gamma-L)\right) \right).
$$
This gives us the second term in~\eqref{thm:phiS_alphaFixedResult1}. To obtain the main result of the theorem, we notice first that the exponentials in~\eqref{thm:phiS_alphaFixedResult1} decay at the same rate as $n \to \infty$ if and only if $L \sim c n^{2/3}$. Therefore we set $L = c n^{2/3}$ for any $c > 0$, which gives~\eqref{thm:phiS_alphaFixedResult}. 
\end{proof}

Theorem~\ref{thm:phiS_alphaFixed} shows that the optimal rate of convergence when $L$ is increased with $n$ is the same as that corresponding to the exponential map $\phiE$.

We consider next the alternative case in which $L$ is bounded and $\alpha \to 0$ as $n \to \infty$. 
\begin{theorem} 
\label{thm:phiS_LFixed} Let $f$ be analytic in the slit-strip $\S^{}_{\beta}\backslash(-\infty,0]$ for some $\beta > 0$. Let $f$ satisfy $|f(x)-f(0)| = \order(|x|^{\tau}_{})$ as $x \to 0$ for some $\tau > 0$. Let $\pnL$ be the approximation defined by~\eqref{eqn:pnLphi} corresponding to the map $\varphi = \phiS(\,\cdot\,;\alpha)$. If $L$ is bounded for all $n$, and $\alpha, \frac{\alpha}{L-1} \to 0$ as $n \to \infty$, then 
\begin{equation}\label{thm:phiS_LFixedResult1}
\| f - p^{(n)}_L \| = \order \left( \max \left\{ \frac{\sqrt{L-1}}{\alpha} \exp \left ( - \frac{\alpha n}{\sqrt{L-1}} \right )  , \exp \left ( \frac{\pi \tau}{\alpha} (1-L) \right) \right\} \right),
\end{equation}
as $n \rightarrow \infty$.  In particular, the error is minimised when $\alpha = \alpha_0 / \sqrt{n}$ and $L = 1 + L_0$ for some $\alpha_0,L_0>0$, in which case
\begin{equation}\label{thm:phiS_LFixedResult2}
\begin{split}
 \| f - p^{(n)}_L \| = \ordera \left ( C_{}^{-\sqrt{n}} \right ) , \quad C = \min \left \{ \exp\left( \frac{\alpha_0}{\sqrt{L_0}} \right) , \exp\left( \frac{\pi \tau L_0}{\alpha_0} \right) \right \}.
\end{split}
\end{equation}

\end{theorem}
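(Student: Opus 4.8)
The plan is to follow the template of the proof of Theorem~\ref{thm:phiS_alphaFixed}: split the error through~\eqref{eqn:pnLphiError} into the Chebyshev interpolation error $\|\FL - \Pn\|_{y\in[-1,1]}$ and the truncation error $\|\FL - \FL(-1)\|_{y\in(-\infty,-1)}$, bound each under the hypotheses $\alpha\to 0$, $L$ bounded, $\alpha/(L-1)\to 0$, and thereby establish~\eqref{thm:phiS_LFixedResult1}; the sharp rate~\eqref{thm:phiS_LFixedResult2} then drops out by optimising the free scalings against $n$.

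For the interpolation error I would first locate the singularities of the transplant $\F(s;\alpha)=f(\phiSi(s;\alpha))$ nearest the real interval $[-L,0]$. Since $\phiSi(\,\cdot\,;\alpha)$ maps $\S_\alpha$ conformally onto the slit-strip of half-width $\alpha$, and since $\alpha\le\beta$ for all large $n$, the transplant is analytic throughout $\S_\alpha$. Its controlling singularities are the logarithmic branch points of $\phiSi$, which occur where $1+e^{\pi(s+\gamma)/\alpha}=0$, namely at $s=-\gamma\pm i\alpha$; the remaining pre-images of the slit lie further out along $\Im(s)=\pm\alpha$. As $\gamma\to 1$ when $\alpha\to 0$, these branch points collapse to $s=-1\pm i\alpha$. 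Passing to the $y$-variable via $y=2s/L+1$ places the limiting singularity at $y_1+iy_2$ with $y_1=1-2\gamma/L$ and $y_2=2\alpha/L$; substituting into~\eqref{eqn:muParabolaEllipse} and expanding for small $y_2$ gives $\sinh^2\mu\sim y_2^2/(1-y_1^2)$, and since $1-y_1^2\sim 4(L-1)/L^2$ this yields $\mu\sim\alpha/\sqrt{L-1}$. Bernstein's bound~\eqref{eqn:BernsteinBound} then delivers the first term of~\eqref{thm:phiS_LFixedResult1}, the prefactor contributing $4/\mu=O(\sqrt{L-1}/\alpha)$ and $m(\mu;\FL)$ remaining $O(1)$ since the shrinking ellipse maps into a region on which $f$ stays bounded.

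For the truncation error I would use $\FL(-1)=f(\xL)$ with $\xL=\phiSi(-L;\alpha)$, noting that for $y<-1$ the argument $\phiSi(L(y-1)/2;\alpha)$ sweeps out $(0,\xL)$. The endpoint hypothesis $|f(x)-f(0)|=\order(|x|^\tau)$ then gives $\|\FL-\FL(-1)\|_{y\in(-\infty,-1)}=\order(\xL^\tau)$. Expanding $\xL=\tfrac{\alpha}{\pi}\log(1+e^{\pi(\gamma-L)/\alpha})\sim\tfrac{\alpha}{\pi}e^{\pi(\gamma-L)/\alpha}$, and using $\gamma=1+\order(\alpha e^{-\pi/\alpha})$ to replace $\gamma$ by $1$ in the exponent at the cost of a factor tending to $1$, produces the second term $\order(\exp(\tfrac{\pi\tau}{\alpha}(1-L)))$. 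Taking the maximum of the two contributions gives~\eqref{thm:phiS_LFixedResult1}.

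Finally, for~\eqref{thm:phiS_LFixedResult2} I would balance the two exponential rates $\tfrac{\alpha n}{\sqrt{L-1}}$ and $\tfrac{\pi\tau(L-1)}{\alpha}$: writing $\alpha=\alpha_0 n^{-a}$ and $L-1=L_0 n^{-b}$, equating the exponents forces a common decay exponent $\tfrac12-\tfrac{b}{4}$, which, subject to $L$ bounded ($b\ge 0$) and $\alpha/(L-1)\to 0$ ($a>b$), is maximised at $b=0$, $a=\tfrac12$, i.e.\ $L=1+L_0$ and $\alpha=\alpha_0/\sqrt n$. Substituting these, the polynomial prefactor of the first term is absorbed by the $\ordera$ notation and the two rates become $e^{-(\alpha_0/\sqrt{L_0})\sqrt n}$ and $e^{-(\pi\tau L_0/\alpha_0)\sqrt n}$, whose maximum yields $C=\min\{\exp(\alpha_0/\sqrt{L_0}),\exp(\pi\tau L_0/\alpha_0)\}$. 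I expect the main obstacle to be the singularity analysis of the second paragraph: recognising that convergence is governed by the Schwarz--Christoffel branch points collapsing to $s=-1$, and extracting $\mu\sim\alpha/\sqrt{L-1}$ from~\eqref{eqn:muParabolaEllipse}, where the appearance of $L-1$ rather than $L$ reflects the fixed unit distance from this singularity to the right endpoint $s=0$.
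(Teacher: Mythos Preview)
Your proposal is correct and follows essentially the same route as the paper: split the error via~\eqref{eqn:pnLphiError}, identify the governing singularities of $\FL$ as the branch points $s=-\gamma\pm i\alpha$ of $\phiSi$, extract $\mu\sim\alpha/\sqrt{L-1}$ from~\eqref{eqn:muParabolaEllipse}, apply Bernstein for the first term, expand $\xL$ for the second, and then balance the two exponential rates. Your optimisation via the power-law ansatz $\alpha=\alpha_0 n^{-a}$, $L-1=L_0 n^{-b}$ is a slightly different packaging of the paper's balancing argument (the paper writes $L-1\sim\nu\alpha^{4/3}n^{2/3}$ and then argues that boundedness of $L$ forces the slowest admissible decay $\alpha=\alpha_0/\sqrt{n}$), but the logic and conclusion are identical.
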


\begin{proof} We first note that for all $\alpha \leq \beta$, the function $f$ is analytic in $\S_{\alpha} \backslash (-\infty,0]$. Therefore for all sufficiently small $\alpha$, the transplant $\F(s;\alpha) = f(\phiSi(s;\alpha))$ is analytic in the strip $\S^{}_\alpha$.  Correspondingly, $\FL(y;\alpha)$ is analytic in the strip $\S^{}_{\alpha/L}$. Specifically, $\phiSi$ introduces limiting singularities in $\FL(y;\alpha)$ at $1-2 \gamma / L \pm 2 \alpha / L i$. The precise rate of convergence in this regime may therefore be determined by setting $y^{}_{1} = 1 - 2\gamma/L + 1$ and $y^{}_{2} = 2\alpha/L$ in \eqref{eqn:muParabolaEllipse}. (Recall $\gamma$ is given by~\eqref{eqn:gamma}.) Doing this, we find  
\begin{align*}
 \mu \sim \alpha / \sqrt{L - 1}, \qquad \alpha, \alpha/(L-1) \to 0.
\end{align*}
Subsitituting this into~\eqref{eqn:BernsteinBound} then gives us the first error term in~\eqref{thm:phiS_LFixedResult1}. 

Consider next the domain error. As $\alpha, \alpha/(L-1) \rightarrow 0$ we see that 
\begin{align*}
\xL = \frac{\alpha}{\pi}\log \left ( 1 + e^{\pi (-L+\gamma)/\alpha} \right ) \sim \frac{\alpha}{\pi} \log \left ( 1 + e^{-\pi (L-1)/\alpha} \right ) \sim \frac{\alpha}{\pi} e^{-\pi (L-1)/\alpha}.
\end{align*}
This gives us the remaining term in~\eqref{thm:phiS_LFixedResult1}.

The second part of the result~\eqref{thm:phiS_LFixedResult2} is obtained by noting that both error terms in~\eqref{thm:phiS_LFixedResult1} decay at the same rate if, as $n \to \infty$, for some $\nu >0$, we have
\begin{align}\label{step}
\frac{L-1}{\alpha} \sim \nu \frac{\alpha}{\sqrt{L-1}} n \quad \Leftrightarrow \quad L-1 \sim \nu \alpha^{4/3} n^{2/3}.
\end{align}
The corresponding rate of convergence is therefore $\ordera(C^{-\alpha^{1/3} n^{2/3}})$. Since we require $\alpha \to 0$, it follows that we seek the slowest decay of $\alpha$ permitted. Moreover, because $L$ is assumed to be bounded,~\eqref{step} shows that the optimal choice is $\alpha = \alpha_0 / \sqrt{n}$, in which case we have $L = 1+L_0$ with $\alpha_0^{}, L^{}_0>0$ fixed. This gives~\eqref{thm:phiS_LFixedResult2}.
\end{proof}

This theorem shows that convergence of $\pnL$ is possible when $L$ is bounded, but that in such cases the convergence necessarily declines to root-exponential in $n$.   Figure~\ref{fig:convOneSided} provides numerical experiments verifying Theorems~\ref{thm:phiS_alphaFixed} and~\ref{thm:phiS_LFixed}. 

\begin{figure}[ht]
\begin{center}
\begin{overpic}[scale=0.7]{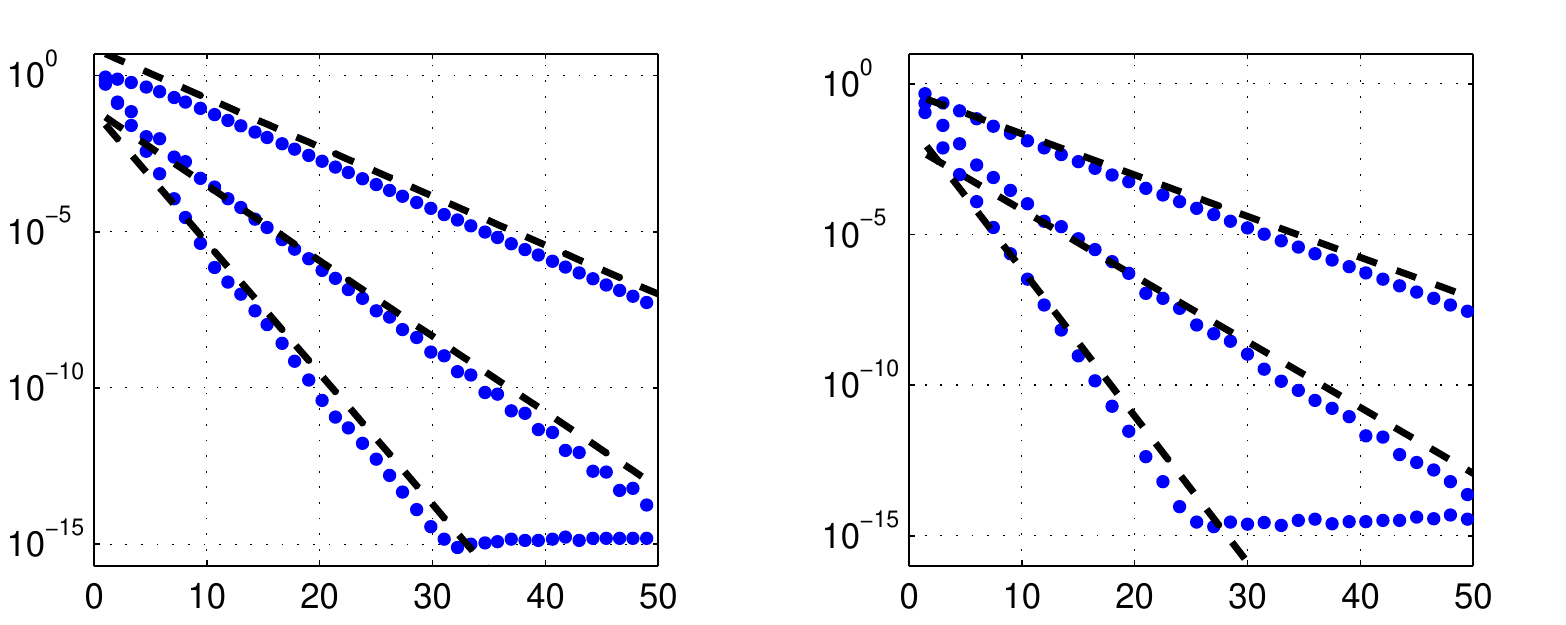}
 \put(13,39){\small $\alpha$ fixed, $L = cn^{2/3}$}
 \put(64,39){\small $L$ fixed, $\alpha = \alpha^{}_0/\sqrt{n}$}
 \put(21,-4){\small $n^{2/3}$}
 \put(39,12){\small $c = 0.23$}
 \put(14,8){\small $c = 0.9$}
 \put(32,27){\small $c = 2.7$}
 \put(74,-4){\small $\sqrt{n}$}
 \put(90,12.5){\small $\alpha^{}_{0} = 0.45$}
 \put(64,7){\small $\alpha^{}_{0} = 1$}
 \put(83,26){\small $\alpha^{}_{0} = 4$}
\end{overpic}
\vspace{0.5em}
\caption{\label{fig:convOneSided} \small Numerical verification of Theorem~\ref{thm:phiS_alphaFixed} (left) and Theorem~\ref{thm:phiS_LFixed} (right) for approximation of the function $f(x) = \sqrt{x}$ on $[0,1]$ using numerical methods based upon the map $\phiS(\,\cdot\,;\alpha)$. In the left pane, we have used the value $\alpha = 1$; in the right pane, we have used $L = 1.8$. The dashed black lines indicate the theoretical slopes given in the theorems.}
\end{center}
\end{figure}

We remark that Theorems~\ref{thm:phiS_alphaFixed} and~\ref{thm:phiS_LFixed} differ in the key respect that they impose different analyticity requirements on $f$. 
This is due to the behaviour of the parameters in each situation.  Specifically, in the second case, the behaviour of $\alpha$ and $L$ ensure that 
  the asymptotically limiting singularity in the approximation is due to the map $\phiSi(\,\cdot\,;\alpha)$ and not the function $f$.

\subsection{Resolution analysis of $\phiS$}\label{sect:resolutionphiS}
Having analysed the convergence of $\phiS$ we now consider its resolution power.  In particular, we shall show that letting $L \rightarrow \infty$ with $\alpha$ fixed as in Theorem~\ref{thm:phiS_alphaFixed} leads to $\order(\omega^{3/2})$ resolution power, but by varying $\alpha$ and $L$ as described in Theorem~\ref{thm:phiS_LFixed}, we are able to obtain $\order(\omega)$ resolution power.

We begin by considering the function
\begin{align*}
 \FL(y) 
 &= \exp\left( 2 \pi i \omega \phiSi\left( \frac{L}{2} (y-1) \right) \right) \\
 &= \exp\left( 2 i \alpha \omega \log\left(1 + \exp\left( \frac{L\pi}{2\alpha} (y-1) + \frac{\gamma \pi}{\alpha} \right) \right)\right).
\end{align*}
Proceeding as before, we set $y = \cos\theta\cosh\mu  +i\sin\theta\sinh\mu$, and consider the asymptotic behaviour as $\mu \to 0$.  We have
\begin{align*}
 \FL(y) \sim \exp \left( 2 i \alpha \omega K(\theta,\mu,L,\alpha) \right), \quad \mu \rightarrow 0,
\end{align*}
where
\begin{align*}
 K(\theta,\mu,L,\alpha) = \log\left(1 + \exp\left(\frac{L\pi}{2\alpha}(\cos \theta -1 ) + \frac{\gamma\pi}{\alpha} \right) \exp\left(i \frac{L \pi \mu}{2 \alpha} \sin \theta \right)\right).
\end{align*}
By decomposing this expression into polar form, we obtain 
\begin{align*}
 \Im \left( K(\theta,\mu,L,\alpha) \right) 
  & = \tan^{-1} \left( \frac{ \exp\left(\dfrac{L \pi}{2 \alpha}(\cos\theta-1) 
   + \dfrac{\pi \gamma}{\alpha}\right) \sin\left( \dfrac{L \pi \mu}{2 \alpha} \sin \theta \right) }
	  { 1 + \exp\left(\dfrac{L \pi}{2 \alpha}(\cos\theta-1) 
   + \dfrac{\pi \gamma}{\alpha}\right) \cos\left( \dfrac{L \pi \mu}{2 \alpha} \sin \theta \right) } \right) . 
\end{align*} 
Now suppose that $\mu \to 0$ in such a way that $L\mu/\alpha \to 0$.  Then\begin{align*}
 \Im \left( K(\theta,\mu,L,\alpha) \right) 
   \sim \frac{L \pi \mu}{2 \alpha} N(\theta,L,\alpha), 
\end{align*} 
where
\vspace{-1em}
\begin{align*}
  N(\theta,L,\alpha) = \frac{ \exp\left(\dfrac{L \pi}{2 \alpha}(\cos\theta-1) 
   + \dfrac{\pi \gamma}{\alpha}\right) \sin\theta}{1 + \exp\left(\dfrac{L \pi}{2 \alpha}(\cos\theta-1) 
   + \dfrac{\pi \gamma}{\alpha}\right)} .
\end{align*}
As was the case for our analysis of the map $\phiE$, maximising $|\FL(y)|$ over $E_\mu$ corresponds to minimising $N(\theta,L,\alpha)$ over $\theta \in [0,2\pi)$.
Indeed, the relevant stationary points can be determined by examining the quantity
\begin{align*}
 \frac{{\rm d}}{{\rm d} \theta}N(\theta,L,\alpha) = I(\theta,L,\alpha) J(\theta,L,\alpha). 
\end{align*}
Here, $I(\theta,L,\alpha)$ is a function of strictly one sign, and
\begin{align*}
 J(\theta,L,\alpha) = 2 \alpha \cos\theta \left(1 + \exp\left(\dfrac{L \pi}{2 \alpha}(\cos\theta-1) 
   + \dfrac{\pi \gamma}{\alpha} \right) \right) + L\pi(\cos^2\theta - 1).
\end{align*}
Thus, the roots of $J(\theta,L,\alpha)$ correspond precisely to the extrema of $N(\theta,L,\alpha)$.  We now make the following observations.  First, we note that the roots of $J(\theta,L,\alpha)$ are the solutions of $J_{1}(\theta,L,\alpha) = J_{2}(\theta,L,\alpha)$, where
\begin{align*}
J_{1}(\theta,L,\alpha) = 1 + \exp \left ( \frac{L \pi}{2 \alpha} (\cos \theta - 1) + \frac{\pi \gamma}{\alpha} \right ) ,\quad J_{2}(\theta,L,\alpha) = \frac{L \pi}{2 \alpha} \tan \theta \sin \theta.
\end{align*}
Next, we observe that $J(\theta,L,\alpha)$ has precisely one root in the interval $[0,\pi)$. This can be seen as follows. The function $J_{1}(\theta,L,\alpha)$ is monotonically decreasing and positive on $[0,\pi)$, whereas the function $J_{2}(\theta,L,\alpha)$ is monotonically increasing and positive on $[0 , \frac{\pi}{2})$ and monotonically increasing and negative on $(\frac{\pi}{2},\pi]$. Since the function $J_2(\cdot,L,\alpha) : [0,\frac{\pi}{2}) \rightarrow [0,\infty)$ is also surjective, the claim follows. 

Let us denote the root lying in $[0,\pi)$ by $\theta^+$. Since $J(\theta,L,\alpha)$ satisfies $J(\eta,L,\alpha) = -J(2\pi-\eta,L,\alpha)$ for $\eta \in [0,\pi)$, it follows that there is also precisely one root in the interval $(\pi,2\pi]$. We denote this root by $\theta^*$ and note that it satisfies $\theta^* = 2\pi-\theta^+$.

We now split our analysis into the two cases considered in Section \ref{ss:conv_phiS}:

\subsubsection{The case $L \rightarrow \infty$, $\alpha$ fixed}\label{sect:psiSalphaFixedLIncreasing}
We claim that
$$
\theta^{+}_{} \sim \xi^{}_{r} / \sqrt{L},\quad L \rightarrow \infty,
$$
where $\xi^{}_{r}=\xi^{}_{r}(\alpha)$ is the unique positive root of the equation
\begin{equation}
\label{xistar_def}
2 \alpha \left ( 1 + \exp \left ( -\frac{\pi \xi^2}{4 \alpha} + \frac{\pi \gamma}{\alpha} \right ) \right ) - \pi \xi^2 = 0.
\end{equation}
To see this, we merely note that 
$$
J(\xi/\sqrt{L},L,\alpha) \sim 2 \alpha \left ( 1 + \exp \left ( -\frac{\pi \xi^2}{4 \alpha} + \frac{\pi \gamma}{\alpha} \right ) \right ) - \pi \xi^2 + \order(L^{-1}),\quad L \rightarrow \infty.
$$
The first term vanishes only when $\xi = \xi^{}_{r}(\alpha)$.  Therefore, as $L\rightarrow \infty$, the function $N(\theta,L,\alpha)$ is minimised at $\theta = 2 \pi - \xi^{}_{r} / \sqrt{L}$.  Moreover, we have
\begin{align*}
N(2 \pi - \xi^{}_{r} / \sqrt{L},L,\alpha ) 
\sim - \frac{\exp \left ( -\dfrac{\pi \xi^{2}_{r}}{4 \alpha} + \dfrac{\pi \gamma}{\alpha} \right ) \dfrac{\xi^{}_{r}}{\sqrt{L}}  }{1+\exp \left ( -\dfrac{\pi \xi^{2}_{r}}{4 \alpha} + \dfrac{\pi \gamma}{\alpha} \right ) } 
= -\left ( 1 - \frac{2 \alpha}{\pi \xi^{2}_{r}} \right ) \frac{\xi^{}_{r}}{\sqrt{L}},
\end{align*}
where the equality follows from a rearrangement of~\eqref{xistar_def}. (Note also that $\pi \xi^{2}_{r}  > 2 \alpha$.)  Thus, as $\mu \rightarrow 0$, $L \mu / \alpha \rightarrow 0$, $L \rightarrow \infty$  we have shown that~\eqref{eqn:ellipseMax} is 
$$
m(\mu;\FL) \sim \exp \left ( \sqrt{L} \pi \omega \left ( 1 - \frac{2 \alpha}{\pi \xi^{2}_{r}} \right ) \xi^{}_{r} \mu \right ).
$$
Therefore upon substitution into~\eqref{eqn:BernsteinBound}, we obtain the resolution criterion 
$$
n \geq  \sqrt{L} \pi \omega \left ( 1 - \frac{2 \alpha}{\pi \xi^{2}_{r}} \right ) \xi^{}_{r}.
$$
\begin{theorem}\label{res_phiS_Linfty}
Let $\R(\omega;\delta)$ be given by~\eqref{eqn:epsilonResolution} where $\{\Psi^{(n)}\}_{n \in \natural}$ is the approximation scheme defined by~\eqref{eqn:pnLphi} corresponding to the map $\varphi = \phiS$.  Let $\alpha$ be fixed and $L = cn^{2/3}$ for some $c > 0$. Then
if $\xi^{}_{r} = \xi^{}_{r}(\alpha)$ is the unique positive root of~\eqref{xistar_def}, then
\vspace{-1em}
\begin{align}\label{eqn:resolutionFigure_phiS}
 \limsup_{\delta \to 1^{-}} \limsup_{|\omega| \to \infty} \frac{\R(\omega;\delta)}{|\omega|^\frac{3}{2}} \leq \left ( \sqrt{c} \pi \left ( 1 - \frac{2 \alpha}{\pi \xi^{2}_{r}} \right ) \xi^{}_{r} \right )^{3/2}.
\end{align}
\end{theorem}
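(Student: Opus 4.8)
The plan is to follow the template of the resolution argument already used for $\phiE$: the asymptotic analysis preceding the theorem statement has essentially reduced everything to a single sufficient condition for convergence of the Bernstein bound, so it remains only to combine this condition with the scaling $L = cn^{2/3}$ and to evaluate the iterated $\limsup$ in the definition~\eqref{eqn:resolutionConstant}. First I would record the outcome of the preceding computation for $f(x) = e^{2\pi i\omega x}$. With $y = \cos\theta\cosh\mu + i\sin\theta\sinh\mu$ and the coupled limit $\mu \to 0$, $L\mu/\alpha \to 0$, $L \to \infty$, the extremal quantity~\eqref{eqn:ellipseMax} satisfies $m(\mu;\FL) \sim \exp\!\big(\sqrt{L}\,\pi\omega\,(1 - 2\alpha/(\pi\xi_r^2))\,\xi_r\,\mu\big)$, where $\xi_r=\xi_r(\alpha)$ is the positive root of~\eqref{xistar_def}. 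Inserting this into~\eqref{eqn:BernsteinBound} gives $\|\FL - \Pn\| \le (4/\mu)\exp\!\big((\sqrt{L}\,\pi\omega\,(1-2\alpha/(\pi\xi_r^2))\,\xi_r - n)\mu\big)$, so the interpolation term begins to decay precisely once $n \ge \sqrt{L}\,\pi\omega\,(1-2\alpha/(\pi\xi_r^2))\,\xi_r$.

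Next I would confirm that the domain-truncation contribution to the error~\eqref{eqn:pnLphiError} is negligible in this regime, so that the interpolation term is the binding constraint on $n$. Since $\xL = \phiSi(-L) \sim (\alpha/\pi)e^{\pi(\gamma-L)/\alpha} \to 0$ exponentially in $L$ for fixed $\alpha$, and since $\FL(-1)=e^{2\pi i\omega\xL}$, one has $\|\FL - \FL(-1)\|_{y\in(-\infty,-1)} \le 2\pi|\omega|\,\xL$. Along the eventual scaling $n = \order(\omega^{3/2})$, $L = cn^{2/3} = \order(\omega)$, this bound is of order $\omega\,e^{-\text{const}\cdot\omega}\to 0$, hence subdominant. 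I would then perform the elementary inversion: substituting $L = cn^{2/3}$, so $\sqrt{L} = \sqrt{c}\,n^{1/3}$, the sufficient condition reads $n \ge \sqrt{c}\,n^{1/3}\,\pi\omega\,(1-2\alpha/(\pi\xi_r^2))\,\xi_r$, i.e.\ $n^{2/3} \ge \sqrt{c}\,\pi\omega\,(1-2\alpha/(\pi\xi_r^2))\,\xi_r$, whence $n \ge \big(\sqrt{c}\,\pi\,(1-2\alpha/(\pi\xi_r^2))\,\xi_r\big)^{3/2}\,\omega^{3/2}$. Dividing by $|\omega|^{3/2}$ and taking $\limsup_{|\omega|\to\infty}$ and then $\limsup_{\delta\to1^-}$ yields~\eqref{eqn:resolutionFigure_phiS}.

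The main obstacle is the bookkeeping that makes the iterated limit in the definition of $\R$ rigorous, rather than any hard calculation. The sufficient condition above was derived under the coupled asymptotics $\mu \to 0$, $L\mu/\alpha \to 0$, $L \to \infty$, whereas $\R(\omega;\delta)$ is defined by an exact finite-$n$ inequality. The function of the outer $\limsup_{\delta\to1^-}$ is exactly to discard the polynomial-in-$L$ and $1/\mu$ prefactors in the Bernstein bound, which shift the threshold at which the error first falls below $\delta$ but leave the leading $\omega^{3/2}$ coefficient untouched. To legitimize passing from the asymptotic equivalence for $m(\mu;\FL)$ to a genuine upper bound on $\R$, I would exhibit, along the relevant scaling with $L=\order(\omega)$ and $n=\order(\omega^{3/2})$, a single admissible choice $\mu=\mu(\omega)\to0$ satisfying both $L\mu\to0$ and $\sqrt{L}\,\omega\,\mu\to\infty$ (for instance $\mu\sim\omega^{-5/4}$ works, since then $\omega\mu\to0$ while $\omega^{3/2}\mu\to\infty$); the latter guarantees that the exponential gain swamps the $4/\mu$ prefactor. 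Once this interlocking of the regimes is in place, the remainder is the routine inversion of $n=\sqrt{L}\,\pi\omega\,(1-2\alpha/(\pi\xi_r^2))\,\xi_r$ under $L=cn^{2/3}$ carried out above.
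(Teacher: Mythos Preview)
Your proposal is correct and follows essentially the same route as the paper: the analysis preceding the theorem statement already produces the asymptotic $m(\mu;\FL)\sim\exp\!\big(\sqrt{L}\,\pi\omega(1-2\alpha/(\pi\xi_r^2))\xi_r\,\mu\big)$ and the resolution criterion $n\ge\sqrt{L}\,\pi\omega(1-2\alpha/(\pi\xi_r^2))\xi_r$, and the paper simply records this and leaves the substitution $L=cn^{2/3}$ implicit. Your additional care in checking that the domain-truncation term is subdominant and in exhibiting an admissible $\mu(\omega)$ to bridge the asymptotic and the exact finite-$n$ inequality goes beyond what the paper spells out, but the underlying argument is the same.
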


\subsubsection{The case $\alpha, \alpha / (L-1) \rightarrow 0$}

We claim that $\theta^* \sim 3\pi/2$ in this case, provided $L < 2$. To see this, set $ \theta^* = 3\pi/2 + \delta$, where $\delta \to 0$ as $\alpha \to 0$. The precise asymptotic behaviour of $\delta$ can then be determined as follows. We have  
\begin{align*}
 J\left(\theta^* , L , \alpha \right) 
& \sim 2 \alpha \delta \left( 1 + \exp \left (  \frac{\pi }{\alpha} - \frac{L \pi}{2 \alpha} \right )\right) - L \pi, \qquad \alpha \rightarrow 0.
\end{align*}
Thus, if
\vspace{-1em}
\begin{align*}
\delta \sim \frac{L \pi}{2 \alpha} \left( 1 + \exp \left ( \frac{\pi }{\alpha} - \frac{L \pi}{2 \alpha} \right )\right)^{-1}, \qquad \alpha \to 0,
\end{align*}
then $J(\theta^*,L,\alpha) \sim 0$ as $\alpha \to 0$, provided $L < 2$.  It follows from the antisymmetric nature of $J(\theta,L,\alpha)$ that the other root, $\theta^+$, satisfies $\theta^+ \sim \pi/2$ as $\alpha \to 0$.  Therefore
\begin{align*}
 N(\theta,L,\alpha) \geq N(\theta^*,L,\alpha) \sim -1,\qquad \alpha \to 0.
\end{align*}
Consequently, as $\mu \to 0$, $\alpha \to 0$, $L \mu /\alpha \to 0$, from~\eqref{eqn:ellipseMax} we obtain
$$
m(\mu;\FL) \sim \exp\left( \mu L \omega \pi \right) .
$$
The corresponding convergence criterion upon substitution into~\eqref{eqn:BernsteinBound} is therefore $n \geq L \omega \pi.$  We have thus proven the following result.

\begin{theorem}\label{thm:phiSoptimalResolutionResult}
Let $\R(\omega;\delta)$ be as in Theorem~\ref{res_phiS_Linfty}.  Let $\alpha$ and $L$ be as in Theorem \ref{thm:phiS_LFixed}.  Then the associated resolution constant $r$ satisfies
\begin{align}\label{eqn:phiSoptimalResolutionResult}
r \leq \pi \left ( \limsup_{n \rightarrow \infty} L \right ).
\end{align}
\end{theorem}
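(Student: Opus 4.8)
\textbf{The plan} is to assemble the pieces already derived in Section~\ref{sect:resolutionphiS} for the regime $\alpha, \alpha/(L-1) \to 0$ and then carefully feed them into the two nested limits defining the resolution constant $r$ in~\eqref{eqn:resolutionConstant}. The substantive analytic work -- locating the extremal $\theta^*\sim 3\pi/2$, showing $N(\theta^*,L,\alpha)\sim -1$, and deriving the convergence criterion $n \geq L\omega\pi$ -- has been done immediately preceding the statement, so my job is essentially to convert the asymptotic \emph{sufficient condition} $n \geq L \omega \pi$ into a bound on $\R(\omega;\delta)/|\omega|$ and then take the limits correctly.

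First I would fix the parameter regime of Theorem~\ref{thm:phiS_LFixed}: $\alpha = \alpha_0/\sqrt{n}$ and $L = 1 + L_0$, so that $L$ is bounded and the hypotheses $\alpha \to 0$, $\alpha/(L-1)\to 0$ hold (note in particular $L < 2$ when $L_0 < 1$, which is what the preceding $\theta^*\sim 3\pi/2$ derivation required). The key output of the calculation above is that, as $\mu \to 0$ with $L\mu/\alpha \to 0$, the Bernstein bound~\eqref{eqn:BernsteinBound} yields
\begin{align*}
\|\FL - \Pn\| \lesssim \frac{4}{\mu}\exp\left(\mu L \omega \pi - \mu n\right),
\end{align*}
so that for any fixed target accuracy the exponent becomes negative -- and the bound can be driven below $\delta$ -- precisely once $n$ exceeds $L\omega\pi$. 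Thus the resolution satisfies $\R(\omega;\delta) \leq L\omega\pi\,(1 + o(1))$ as $|\omega|\to\infty$, for the value of $L$ in force at the relevant $n$. Dividing by $|\omega|$ and taking $\limsup_{|\omega|\to\infty}$ gives a bound of $\pi L$; then taking $\limsup_{\delta\to 1^-}$ leaves this unchanged since the leading asymptotic factor $L\pi$ does not depend on $\delta$. Finally, since $L$ may vary with $n$ (through $L = 1 + L_0$ one keeps it constant, but the statement is phrased to allow the more general bounded behaviour), the governing constant is $\limsup_{n\to\infty} L$, yielding $r \leq \pi\,(\limsup_{n\to\infty} L)$ as claimed in~\eqref{eqn:phiSoptimalResolutionResult}.

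\textbf{The main obstacle}, and the step requiring the most care, is the bookkeeping of the four interacting limits -- $n \to \infty$, $\omega \to \infty$, $\mu \to 0$, and $\alpha = \alpha_0/\sqrt{n}\to 0$ -- and in particular verifying that a single admissible choice of $\mu$ simultaneously satisfies $\mu\to 0$ and $L\mu/\alpha\to 0$ along the path where $n$ scales with $\omega$. Since $\alpha = \alpha_0/\sqrt{n}$ shrinks as $n$ grows, the constraint $L\mu/\alpha \to 0$ forces $\mu = o(\alpha) = o(1/\sqrt{n})$, and one must confirm that such a $\mu$ still allows the exponent $\mu(L\omega\pi - n)$ to be made suitably negative once $n > L\omega\pi$; this is where the prefactor $4/\mu$, which blows up as $\mu\to 0$, must be controlled against the exponential gain, exactly the kind of estimate absorbed into the $\ordera(\cdot)$ notation. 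I would handle this by choosing $\mu$ tending to zero slowly enough (e.g.\ $\mu$ a fixed small multiple of $\alpha$, or $\mu \sim n^{-2/3}$) that the polynomial prefactor is dominated, which is consistent with the $\ordera$ convention and mirrors the prefactor handling already used in Theorems~\ref{thm:phiS_alphaFixed} and~\ref{thm:phiS_LFixed}.
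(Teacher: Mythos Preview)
Your proposal is correct and follows essentially the same approach as the paper: the analytic computation $m(\mu;\FL)\sim\exp(\mu L\omega\pi)$ and the resulting sufficient condition $n\geq L\omega\pi$ are derived in the text immediately preceding the theorem, and the theorem is then stated as an immediate consequence. The paper is considerably more terse than your write-up---it does not make explicit the nested-limit bookkeeping or the choice of $\mu$ relative to $\alpha$ that you flag as the ``main obstacle''---so your version is, if anything, more careful about justifying the passage from the asymptotic criterion to the bound on $r$.
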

We note that the result of Theorem~\ref{thm:phiSoptimalResolutionResult} is not stated simply as $r \leq L \pi$. Though such a formulation would be correct for fixed $L > 1$, the $\limsup$ allows us more generality.

Figure~\ref{fig:numericalExperiments_phiS} provides a numerical experiment verifying Theorem~\ref{thm:phiSoptimalResolutionResult}.

\begin{figure}[ht]
\begin{center}
\begin{overpic}[scale=0.55]{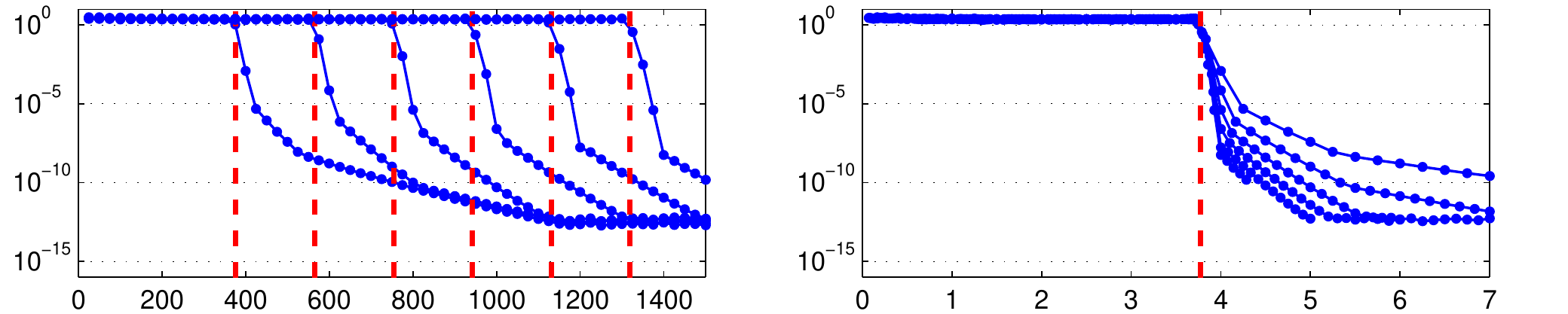}
 \put(25,-3) {\footnotesize $n$}
 \put(73,-3) {\footnotesize $n/\omega$}
\end{overpic}
\vspace{0.5em}
\caption{\label{fig:numericalExperiments_phiS} \small The error in approximating $e^{2\pi i \omega x}$ on $[0,1]$ for $\omega = 50,100,\ldots,350$ using~\eqref{eqn:pnLphi} in conjunction with the map $\varphi = \phiS$. We have set $L = 1.2$ and $\alpha = 0.7/\sqrt{n}$. The dashed red lines indicate the resolution estimate $\approx1.2 \pi \omega \approx 3.7699 \omega$ given by~\eqref{eqn:phiSoptimalResolutionResult}.}
\end{center}
\end{figure}

\section{Infinite-interval maps}\label{sect:inf}

Let $f(x)$ be a function analytic on $(0,1)$ and continuous on $[0,1]$ and suppose that $\psi: (0,1) \mapsto (-\infty,\infty)$ is an invertible mapping.  We again use $\F(s)$ to denote the transplant of $f(x)$ to the $s$-variable:
\begin{align*}
 \F(s) = f(\psi_{}^{-1}(s)), \qquad s \in (-\infty,\infty).
\end{align*}
Proceeding as before, we choose some $L > 0$ and construct an approximation to $\F(s)$ on $[-L,L]$. This can be achieved by defining
\begin{align*}
 \FL(y) = \F(Ly), \qquad y \in (-\infty,\infty),
\end{align*}
which represents a scaling of the interval $[-L,L]$ to $[-1,1]$. 

We will again use the Chebyshev interpolant defined by~\eqref{eqn:chebInterpolant} to approximate the function $\FL(y)$ on $[-1,1]$. Moreover, if $\xL = \psi^{-1}(-L) = 1 - \psi^{-1}(L)$, then the final approximation $\pnL$ to $f(x)$ on $[0,1]$ can be defined as follows:
\begin{align}\label{eqn:pnLpsi}
 \pnL(x) = \left\{\begin{array}{cl}
  \FL(-1),                & \quad    x \in [0,\xL);   \\
  \Pn(\psi(x)/L),         & \quad    x \in [\xL,1-\xL]; \\
  \FL(1),                 & \quad    x \in (1-\xL,1].      
\end{array}\right.
\end{align}
The corresponding error equation is therefore given by
\begin{align}
\nonumber
 \| f-\pnL \| = \max\big\{ \, \|\FL - P_n \|^{}_{y \in [-1,1]}, \qquad \qquad \qquad  \qquad \qquad \qquad \qquad \qquad \\  
\label{eqn:pnLpsiError}    
\qquad  \qquad \qquad \qquad \| \FL-F^{}_L(-1) \|^{}_{y \in (-\infty,-1)} \, , \, 
\| \FL-F^{}_L(1) \|^{}_{y \in (1,\infty)} \, \big\}.
\end{align}

\subsection{Unparameterised exponential map $\psiE$}

The natural analogue of the map $\phiE$ in the infinite interval setting together with its inverse is:
\begin{align*}
 \psiE(x) = \log\left(\frac{x}{1-x}\right), \qquad	
 \psiEi(s) = \frac{\exp(s)}{1+\exp(s)}.
\end{align*}

For our derivation of the new map $\psiSi$, it will be helpful for us to consider the action of the inverse transform $\psiEi$ on an infinite strip. If $\S_\alpha^{}$ is the infinite strip of half-width $\alpha < \pi$, then $\psiEi(\S_\alpha^{})$ is a \emph{lens}-shaped region formed by two circular arcs meeting with half-angle $\alpha$ at $x = 0$ and $x=1$. This is illustrated in Figure~\ref{fig:stripToLens}.

\begin{figure}[ht]
\begin{center}
\begin{overpic}[scale=0.45]{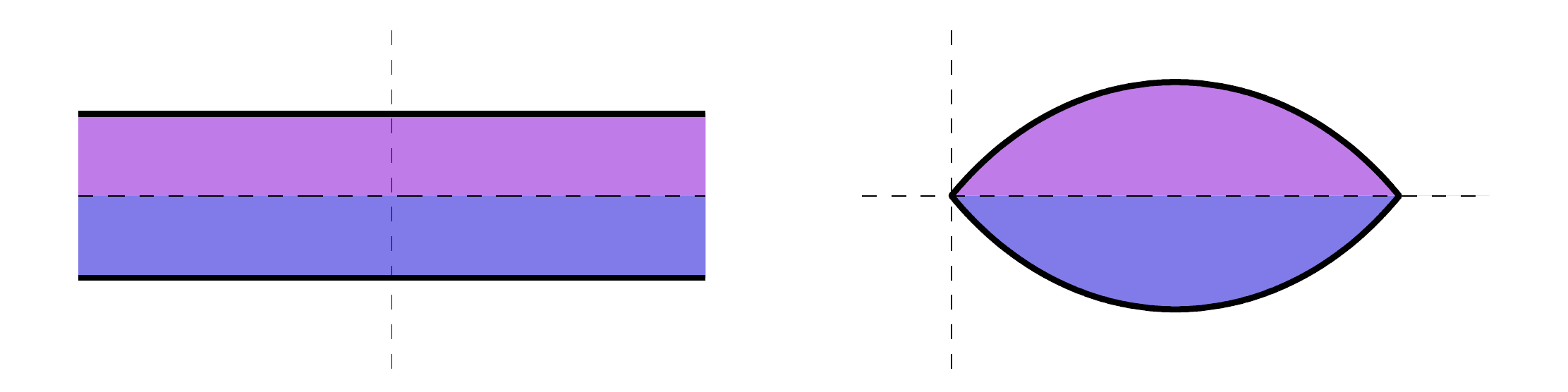}
\put(30,20){\footnotesize $\S^{}_\alpha$}
\put(70,22){\footnotesize $\psiEi(\S^{}_\alpha)$}
\put(58,9){\footnotesize $0$}
\put(90,9){\footnotesize $1$}
\put(1,17.2){\footnotesize $i\alpha$}
\put(-1.5,6){\footnotesize $-i\alpha$}
\end{overpic}
\caption{\label{fig:stripToLens} \small For any $\alpha < \pi$, $\psiEi$ maps the infinite strip of half-width $\alpha$ onto the lens-shaped region formed by two circular arcs meeting with half-angle $\alpha$ at $x=0$ and $x = 1$.}
\end{center}
\end{figure}

\subsection{Convergence rate for $\psiE$} We quote the relevant result from~\cite{Richardson13}:

\begin{theorem}[\cite{Richardson13}, Thm.~3.2]\label{thm:psiE}
 Let $f(x)$ be analytic and bounded in $\psiEi(\S_\alpha^{})$ for some $0< \alpha < \pi$. Let $f$ satisfy $|f(x)-f(0)| = \order(|x|^\tau)$ as $x \to 0$ and $|f(x)-f(1)| = \order(|1-x|^\tau)$ as $x \to 1$, for some $\tau > 0$. Let $\pnL$ be the approximation defined by~\eqref{eqn:pnLpsi} corresponding to the map $\psi = \psiE$. If $c>0$ and $L = c\sqrt{n}$ then as $n \to \infty$,
 \begin{align*}
  \|f-\pnL\| = \ordera\left( C^{-\sqrt{n}} \right)  , \qquad C = \min\left\{ \exp\left( \alpha / c \right) , \exp \left( \tau c \right) \right\} .
 \end{align*}
 \end{theorem}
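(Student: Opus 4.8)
The plan is to mirror the argument used for the semi-infinite maps (Theorems~\ref{thm:phiE} and~\ref{thm:phiS_alphaFixed}), the only essential change being that the relevant region of analyticity in the $s$-plane is now a \emph{strip} rather than a parabola. I would begin from the three-part error decomposition~\eqref{eqn:pnLpsiError} and bound the interpolation error and the two domain-truncation errors separately, before balancing them through the choice $L = c\sqrt{n}$.

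For the interpolation error $\|\FL - \Pn\|_{y\in[-1,1]}$ I would first note that since $f$ is analytic and bounded on the lens $\psiEi(\S_\alpha^{})$, the transplant $\F(s) = f(\psiEi(s))$ is analytic and bounded on the strip $\S_\alpha^{}$, and hence $\FL(y) = \F(Ly)$ is analytic and bounded on the rescaled strip $\S_{\alpha/L}^{}$. The largest Bernstein ellipse $E_\mu$ contained in this strip is the one whose imaginary semi-axis touches the edges, i.e. $\sinh\mu = \alpha/L$, giving $\mu = \sinh^{-1}(\alpha/L) \sim \alpha/L$ as $L\to\infty$. The uniform boundedness of $f$ gives $m(\mu;\FL) = \order(1)$ uniformly in $L$, since every such ellipse is a subset of the strip on which $\FL$ is bounded by $\sup|f|$. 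Bernstein's bound~\eqref{eqn:BernsteinBound} then yields $\|\FL - \Pn\|_{y\in[-1,1]} = \order(L\, e^{-\alpha n/L})$ as $L\to\infty$.

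For the two domain-truncation errors I would use the endpoint decay hypotheses. The truncation point is $\xL = \psiEi(-L) = e^{-L}/(1+e^{-L}) \sim e^{-L}$, and by the symmetry $\xL = 1 - \psiEi(L)$ the same estimate holds at the right endpoint. On $(0,\xL)$ we approximate $f$ by the constant $\FL(-1) = f(\xL)$; the triangle inequality together with $|f(x)-f(0)| = \order(|x|^\tau)$ and $|x|\le\xL$ gives $\|\FL - \FL(-1)\|_{y\in(-\infty,-1)} = \order(\xL^\tau) = \order(e^{-\tau L})$, and the condition at $x=1$ gives the identical bound for the right-hand tail. Feeding these three estimates into~\eqref{eqn:pnLpsiError} produces $\|f-\pnL\| = \order(\max\{L\, e^{-\alpha n/L},\, e^{-\tau L}\})$.

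The final step is to balance the two competing exponentials: the interpolation term decays like $e^{-(\alpha/L)n}$ and the truncation term like $e^{-\tau L}$, so equating the exponents forces $L \propto \sqrt{n}$. I would then set $L = c\sqrt{n}$, absorb the polynomial prefactor $L$ into the $\ordera$ notation, and read off $\|f-\pnL\| = \ordera(C^{-\sqrt{n}})$ with $C = \min\{e^{\alpha/c}, e^{\tau c}\}$. I expect the only delicate point to be the uniform-in-$L$ control of $m(\mu;\FL)$ as $\mu$ approaches the strip boundary; this is precisely where the boundedness hypothesis on $f$ (rather than mere analyticity) is needed, and it is what permits an $\order(1)$ prefactor instead of one growing with $L$.
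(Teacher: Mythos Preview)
Your proposal is correct and follows precisely the template the paper uses for its analogous results (e.g.\ Theorems~\ref{thm:phiS_alphaFixed} and~\ref{thm:psiS_alphaFixed}): bound the interpolation term via Bernstein using the strip $\S_{\alpha/L}$, bound the two tails via the H\"older conditions and $\xL \sim e^{-L}$, and balance with $L = c\sqrt{n}$. Note that the paper itself does not prove Theorem~\ref{thm:psiE}; it is quoted from~\cite{Richardson13}, so there is no in-paper proof to compare against, but your argument is exactly what that reference does.
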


\subsection{Resolution analysis of $\psiE$}

As before, we consider the oscillatory complex exponential $f(x) = e^{2\pi i \omega x}$ on $[0,1]$. The transplanted function $\FL(y)$ is
\begin{align*}
 \FL(y) = f(\psiEi(Ly)) = \exp\left( 2 \pi i \omega \left(\frac{\exp(Ly)}{1+\exp(Ly)} \right) \right),
\end{align*}
and again, our task is to obtain an estimate for the quantity $m(\mu;\FL)$ in~\eqref{eqn:ellipseMax}. Letting $y = \cos\theta\cosh\mu + i\sin\theta\sinh\mu$ denote the Bernstein ellipse $E_\mu$, we have
\begin{align*}
 |\FL(y)| = \exp\left( -2 \pi \omega \, G(\theta,\mu,L) \right) ,
\end{align*}
where
\vspace{-1em}
\begin{align*}
 G(\theta,\mu,L) = \Im\left(  \frac{\exp(L(\cos \theta \cosh \mu + i \sin \theta \sinh \mu))}{1+\exp(L(\cos \theta \cosh \mu + i \sin \theta \sinh \mu))} \right).
\end{align*}
It follows that maximising $|\FL(y)|$ over $E_\mu$ is equivalent to minimising $G(\theta,\mu,L)$ over $\theta \in [0,2\pi)$. After some algebra, we find that

\begin{displaymath}
 G(\theta,\mu,L) = \frac{\exp(L\cos\theta\cosh\mu)\sin(L\sin\theta\sinh\mu)}{1+2\exp(L\cos\theta\cosh\mu)\cos(L\sin\theta\sinh\mu)+\exp(2L\cos\theta\cosh\mu)}.
\end{displaymath}
We are interested in particular in the scenario $\mu \to 0$, $L \to \infty$, $\mu L \to 0$, for which 
\begin{align*}
 G(\theta,\mu,L) \sim \frac{1}{2} L \mu P(\theta), \qquad 
 P(\theta) = \frac{\sin\theta}{1+\cosh(L \cos \theta)}.
\end{align*}
For any $L > 0$, one can verify that $P(3\pi/2) = -1/2$ is the global minimum of $P(\theta)$ over $[0,2\pi)$. Thus in~\eqref{eqn:ellipseMax}, as $\mu \to 0$, $L \to \infty$, $\mu L \to 0$, we have 
\begin{align*}
 m(\mu;\FL) \sim \exp\left( \frac{\pi}{2} L \omega \mu \right).
\end{align*}
Upon substituting this into~\eqref{eqn:BernsteinBound}, we then see that the resolution criterion is
\begin{align}\label{eqn:resolutionFigure_psiEwithoutc}
 n \geq \frac{\pi}{2} L \omega.
\end{align}

\begin{theorem}
Let $\R(\omega;\delta)$ be given by~\eqref{eqn:epsilonResolution} where $\{\Psi^{(n)}\}_{n \in \natural}$ is the approximation scheme defined by~\eqref{eqn:pnLpsi} corresponding to the map $\psi = \psiE$. Let $L = c\sqrt{n}$. Then 
\vspace{-1em}
\begin{align}\label{eqn:resolutionFigure_psiE}
 \limsup_{\delta \to 1^{-}} \limsup_{|\omega| \to \infty} \frac{\R(\omega;\delta)}{|\omega|^{2}} \leq  \left(\frac{\pi c}{2}\right)^{2}.
\end{align}
\end{theorem}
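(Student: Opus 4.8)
The plan is to feed the resolution criterion~\eqref{eqn:resolutionFigure_psiEwithoutc}, namely $n \geq \frac{\pi}{2}L\omega$, into the scaling $L = c\sqrt{n}$ and solve for $n$. Substituting gives $n \geq \frac{\pi c}{2}\sqrt{n}\,\omega$, that is $\sqrt{n} \geq \frac{\pi c}{2}\omega$, and hence $n \geq (\pi c/2)^2\omega^2$. This is precisely the threshold appearing in~\eqref{eqn:resolutionFigure_psiE}, so the whole task reduces to showing that crossing this threshold forces the total error below any fixed $\delta$, thereby furnishing the required upper bound on $\R(\omega;\delta)$. The argument mirrors exactly the one used for $\phiE$ leading to~\eqref{eqn:resolutionFigure_phiE}.

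Before the main estimate I would first confirm that in this regime the interpolation term is the binding one in the error identity~\eqref{eqn:pnLpsiError}. For $f(x) = e^{2\pi i \omega x}$ the two domain errors are governed by $|e^{2\pi i\omega x} - e^{2\pi i\omega \xL}|$ and its counterpart near $x=1$; since $\xL = \psiEi(-L) \sim e^{-L}$ as $L \to \infty$, these are $\order(\omega e^{-L}) = \order(\omega e^{-c\sqrt{n}})$, which decays faster than any inverse power of $\omega$ once $n \sim \omega^2$. Thus only the Chebyshev term survives, and the sufficient condition for smallness reduces to~\eqref{eqn:resolutionFigure_psiEwithoutc} as already derived.

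For the main estimate, fix $\epsilon > 0$ and take $n = \lceil (1+\epsilon)(\pi c/2)^2\omega^2 \rceil$, so that $L = c\sqrt{n} \sim \frac{\pi c^2}{2}\omega$ and the margin $n - \frac{\pi}{2}L\omega$ grows like a positive constant times $\omega^2$. The estimate $m(\mu;\FL) \sim \exp(\frac{\pi}{2}L\omega\mu)$ is valid only in the coupled limit $\mu \to 0$, $L \to \infty$, $\mu L \to 0$, so the delicate point is to exhibit a single choice $\mu = \mu(\omega)$ respecting $\mu L \to 0$ while simultaneously making $\mu(n - \frac{\pi}{2}L\omega) \to \infty$. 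Since these demand $\mu = o(1/\omega)$ and $\mu \gg \omega^{-2}$ respectively, any intermediate scale such as $\mu = \omega^{-3/2}$ works. Inserting this into~\eqref{eqn:BernsteinBound} gives $\|\FL - \Pn\| \lesssim \frac{4}{\mu}\exp(-\mu(n - \frac{\pi}{2}L\omega)) \to 0$ as $\omega \to \infty$, the polynomial prefactor being swamped by the exponential.

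Consequently the error falls below any fixed $\delta \in (0,1)$ for all sufficiently large $\omega$, whence $\R(\omega;\delta) \leq (1+\epsilon)(\pi c/2)^2\omega^2$ eventually. Dividing by $\omega^2$, taking $\limsup_{|\omega| \to \infty}$ and then $\limsup_{\delta \to 1^-}$, and finally letting $\epsilon \to 0$, yields~\eqref{eqn:resolutionFigure_psiE}. I expect the only genuine obstacle to be this balancing of the two competing constraints on $\mu$, which is what makes the asymptotic form of $m(\mu;\FL)$ applicable at the same time as the Bernstein exponent becomes negative; the remaining manipulation of the nested limits is routine bookkeeping.
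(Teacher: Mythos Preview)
Your proposal is correct and follows the same approach as the paper: derive the resolution criterion $n \geq \tfrac{\pi}{2}L\omega$ from the Bernstein bound and the asymptotic $m(\mu;\FL) \sim \exp(\tfrac{\pi}{2}L\omega\mu)$, then substitute $L = c\sqrt{n}$ to obtain $n \geq (\pi c/2)^2\omega^2$. Your treatment is in fact more careful than the paper's, which simply states the criterion and the theorem without explicitly exhibiting a compatible choice of $\mu$ or checking that the domain-truncation terms in~\eqref{eqn:pnLpsiError} are negligible.
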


In summary, the numerical method based upon the infinite interval transform $\psiE$ converges at the rate $\ordera(C^{-\sqrt{n}})$, but requires $\order(\omega)$ points-per-wavelength, and therefore $\order(\omega^2)$ points in total, to resolve the oscillatory complex exponential $e^{2\pi i \omega x}$.

A numerical verification of this result is provided in Figure~\ref{fig:numericalExperiments_psiE}.

\begin{figure}[ht]
\begin{center}
\begin{overpic}[scale=0.55]{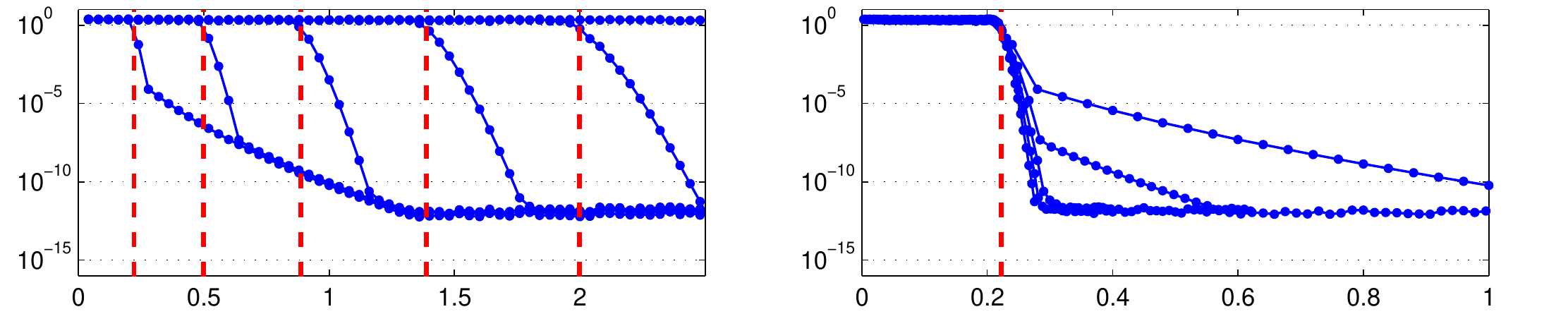}
 \put(24,-4) {\footnotesize $n$}
 \put(37,-2) {\tiny $\times 10^4$}
 \put(72,-4) {\footnotesize $n/\omega^{2}$}
 \put(89,-2) {\tiny $\times 10^2$}
\end{overpic}
\vspace{1em}
\caption{\label{fig:numericalExperiments_psiE} \small The error $\|f - \pnL\|$ in approximating $f(x) = e^{2 \pi i \omega x}$ on $[0,1]$ using~\eqref{eqn:pnLpsi} in conjunction with the map $\psi = \psiE$ for $\omega = 100, 150, \ldots, 350$. In each case, we use the value of $c = 0.3$ in the relationship $L = c \sqrt{n}$.  The dashed red lines indicate the theoretical resolution figure ($\approx  0.222 \omega^2$) given by~\eqref{eqn:resolutionFigure_psiE} corresponding to this particular choice of $c$. }
\end{center}
\end{figure}

\subsection{Parameterised exponential map $\psiS$} We now introduce an analogue of the slit-strip map $\phiS$ for the infinite interval setting. As before, this will lead us to a numerical method requiring a fixed number of points-per-wavelength to resolve the complex exponential $e^{2 \pi i \omega x}$ as $\omega \to \infty$.

The derivation proceeds much as before.  We begin with the infinite strip $\S_\alpha^{}$ in the $s$-variable, and construct a map via the Schwarz--Christoffel formula which takes us to the \emph{two-slit strip} $\S_\alpha^{} \backslash \{(-\infty,0] \cup [1,\infty)\}$ in the $x$-variable. See Figure~\ref{fig:stripToTwoSlitStrip}. 

\begin{figure}[ht]
\begin{center}
\begin{overpic}[scale=0.5]{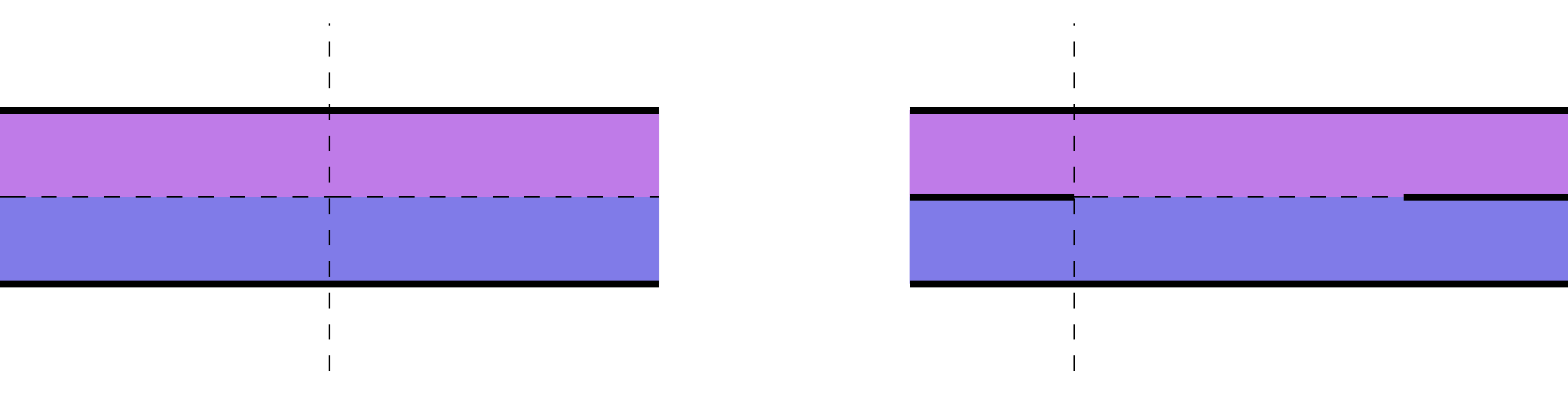}
 \put(28,20.5) {\footnotesize $\S^{}_\alpha$}
\put(-4,17.2){\footnotesize $i\alpha$}
\put(-6.5,6){\footnotesize $-i\alpha$}
 \put(73,20.5) {\footnotesize $\psiSi(\S^{}_\alpha;\alpha)$}
\put(54,17.2){\footnotesize $i\alpha$}
\put(52,6){\footnotesize $-i\alpha$}
\put(66,9){\footnotesize $0$}
\put(89,9){\footnotesize $1$}
\end{overpic}
\vspace{-1em}
\caption{\label{fig:stripToTwoSlitStrip} \small The transformation $\psiSi(\,\cdot\,;\alpha)$ maps the infinite strip of half-width $\alpha$ 
to the infinite two-slit strip of half-width $\alpha$.}
\end{center}
\end{figure}

For brevity's sake we forgo the details of the construction, and merely note that the corresponding two-slit map and its inverse are given by 
\begin{align*}
 \psiSi(s;\alpha) &= 
\frac{\alpha}{\pi} \log \left( \frac{  1 + \exp(\pi(s+1/2)/\alpha) }
{ 1 + \exp(\pi(s-1/2)/\alpha) } \right), \\
\psiS(x;\alpha) &= \frac{\alpha}{\pi} \log\left( \frac{\exp(\pi x / \alpha) - 1}
{1 - \exp(\pi(x-1)/\alpha)} \right) -\frac{1}{2} .
\end{align*}

\subsection{Convergence analysis of $\psiS$}\label{ss:conv_psiS} 

As with the analysis of $\phiS$, we have two situations to consider. The first involves fixing $\alpha$ and allowing $L \to \infty$ as $n \to \infty$; the second involves keeping $L$ bounded and allowing $\alpha \to \infty$ as $n\ \to \infty$.

\begin{theorem}\label{thm:psiS_alphaFixed} 
Let $f(x)$ be analytic and bounded in the region $\psiSi(\S^{}_{\beta};\alpha)$ for some $0 < \beta \leq \alpha$. Let $f$ satisfy $|f(x)-f(0)| = \order(|x|^\tau)$ as $x \to 0$ and $|f(x)-f(1)| = \order(|1-x|^\tau)$ as $x \to 1$ for some $\tau > 0$. Let $\pnL$ be the approximation defined by~\eqref{eqn:pnLpsi} corresponding to the map $\psi = \psiS(\,\cdot\,;\alpha)$. Then for any $c > 0$ the choice $L = c\sqrt{n}$ minimises the error as $n \to \infty$, in which case we have
\begin{align}\label{thm:psiS_alphaFixedResult2} 
    \|f - \pnL\| = \ordera \s \left(C^{-\sqrt{n}}\right) ,\qquad C = \min \left \{ \exp\left ( \beta / c  \right ),  \exp\left( \pi \tau c / \alpha \right ) \right \}.
   \end{align}
\end{theorem}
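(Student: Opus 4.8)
The plan is to mirror the argument of Theorem~\ref{thm:phiS_alphaFixed}, adapting it to the infinite-interval error decomposition~\eqref{eqn:pnLpsiError}, which now splits into a single Chebyshev interpolation error together with \emph{two} domain-truncation errors, one at each endpoint. The first step is to identify the region of analyticity of the transplant. The map $\psiSi(\,\cdot\,;\alpha)$ has its nearest singularities on the lines $\Im(s)=\pm\alpha$ (where the logarithm's arguments vanish), so it is analytic precisely on $\S^{}_\alpha$; combined with the hypothesis $0<\beta\le\alpha$ and the analyticity of $f$ on $\psiSi(\S^{}_\beta;\alpha)$, the composite $\F(s;\alpha)=f(\psiSi(s;\alpha))$ is analytic and bounded on the strip $\S^{}_\beta$, with its limiting singularities sitting on $\partial\S^{}_\beta$, i.e.\ at $\Im(s)=\pm\beta$. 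The rescaling $\FL(y)=\F(Ly)$ then pushes these singularities to $\Im(y)=\pm\beta/L$, so $\FL$ is analytic on the thin strip $\S^{}_{\beta/L}$.

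Next I would extract the interpolation rate. The largest Bernstein ellipse fitting inside $\S^{}_{\beta/L}$ is tangent to the strip walls at its topmost and bottommost points $y=\pm i\sinh\mu$, so the binding singularity corresponds to $y_1=0$, $y_2=\beta/L$. Substituting these into the general relation~\eqref{eqn:muParabolaEllipse} collapses the radical exactly, yielding $\sinh^2\mu=(\beta/L)^2$ and hence $\mu\sim\beta/L$ as $L\to\infty$. Since $f$, and therefore $\FL$, is bounded, the quantity $m(\mu;\FL)$ in~\eqref{eqn:ellipseMax} stays $\order(1)$ as $\mu\to0$, so Bernstein's bound~\eqref{eqn:BernsteinBound} gives a first contribution of order $L\exp(-\beta n/L)$, the prefactor $1/\mu\sim L/\beta$ being absorbed by the $\ordera$ notation.

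For the two truncation errors I would invoke the endpoint hypotheses. On $[0,\xL)$ the approximation equals $\FL(-1)=f(\xL)$, so for $x\in(0,\xL)$ the left-hand error is bounded by $|f(x)-f(0)|+|f(0)-f(\xL)|=\order(\xL^\tau)$; the same estimate at the right end follows from the $x\to1$ condition together with $\psiSi(L;\alpha)=1-\xL$. Expanding the explicit formula for $\psiSi$ at $s=-L$ gives $\xL=\psiSi(-L;\alpha)\sim\tfrac{2\alpha}{\pi}\sinh(\pi/(2\alpha))\,e^{-\pi L/\alpha}$, so both tails are $\order(e^{-\pi\tau L/\alpha})$. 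Feeding the three pieces through~\eqref{eqn:pnLpsiError} yields $\|f-\pnL\|=\order\big(\max\{L\,e^{-\beta n/L},\,e^{-\pi\tau L/\alpha}\}\big)$. Finally I would balance the two exponents: equating $\beta n/L$ with $\pi\tau L/\alpha$ forces $L\propto\sqrt{n}$, and writing $L=c\sqrt{n}$ turns them into $(\beta/c)\sqrt{n}$ and $(\pi\tau c/\alpha)\sqrt{n}$, giving the claimed $\ordera(C^{-\sqrt{n}})$ rate of~\eqref{thm:psiS_alphaFixedResult2} with $C=\min\{\exp(\beta/c),\exp(\pi\tau c/\alpha)\}$; any other power of $L$ slows one of the two rates, confirming $c\sqrt{n}$ as optimal.

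The main obstacle, as in the one-sided case, is the geometric step of correctly locating the limiting singularity of $\FL$ and verifying $\mu\sim\beta/L$ — in particular justifying that for $\beta=\alpha$ the binding singularity arises from the map $\psiSi$ itself rather than from $f$, and checking that the endpoint-regularity exponent $\tau$ controls \emph{both} truncation tails symmetrically. Once these are secured, the balancing and the bookkeeping of the three error contributions in~\eqref{eqn:pnLpsiError} are routine.
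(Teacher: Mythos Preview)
Your proposal is correct and follows precisely the route the paper intends: the paper's own proof simply states that the argument is similar to that of Theorem~\ref{thm:phiS_alphaFixed} and omits the details. Your adaptation---replacing the parabolic region $\P_d$ by the strip $\S^{}_\beta$ so that $\mu\sim\beta/L$, handling the two symmetric truncation tails via $\xL\sim\tfrac{2\alpha}{\pi}\sinh(\pi/(2\alpha))e^{-\pi L/\alpha}$, and balancing to get $L=c\sqrt{n}$---fills in exactly the omitted details.
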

\begin{proof} The proof is similar to that of the corresponding result for $\phiS$.  Hence we omit the details.
\end{proof}

\begin{theorem}\label{thm:psiS_LFixed}
Let $f$ be analytic in the two-slit strip $\S^{}_{\beta} \backslash \left\{ (-\infty,0] \cup [1,\infty) \right\}$ for some $\beta > 0$. Let $f$ satisfy $|f(x)-f(0)| = \order(|x|^\tau)$ as $x \to 0$ and $|f(x)-f(1)| = \order(|1-x|^\tau)$ as $x \to 1$, for some $\tau > 0$. Let $\pnL$ be the approximation defined by~\eqref{eqn:pnLpsi} corresponding to the map $\psi = \psiS(\,\cdot\,;\alpha)$. If $L > 1/2$ is bounded for all $n$, and $\alpha , \alpha/(L-1/2) \rightarrow 0$ as $n \rightarrow \infty$, then
{\small 
\begin{align}\label{thm:psiS_LFixedResult1}
\| f - p^{(n)}_L \| = \order \left ( \max \left \{  \frac{\sqrt{L^2-1/4}}{\alpha} \exp\left ( - \frac{\alpha n}{\sqrt{L^2-1/4}} \right) , \exp\left( -\frac{\pi \tau}{\alpha} (L-1/2) \right )  \right \} \right ),
\end{align}
}
as $n \rightarrow \infty$.  In particular, the error is minimised when $\alpha = \alpha_0 / \sqrt{n}$ and $L = 1/2 + L_0$ for $\alpha_0,L_0>0$, in which case we have
{\small
\begin{align}\label{thm:psiS_LFixedResult2}
\| f - p^{(n)}_L \| = \ordera \s \left ( C^{-\sqrt{n}} \right ), \quad C = \min \left \{ \exp\left( \frac{\alpha_0}{\sqrt{L_0(1+L_0)}} \right) , \exp\left( \frac{\pi \tau L_0}{\alpha_0} \right) \right \}.
\end{align}
}
\end{theorem}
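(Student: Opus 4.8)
The plan is to mirror the proof of Theorem~\ref{thm:phiS_LFixed} almost line for line, making the two changes forced by the present setting: there are now two endpoints (hence two domain-truncation terms and two intrinsic map singularities), and the two-slit geometry replaces the factor $\sqrt{L-1}$ by $\sqrt{L^2-1/4}$. The error decomposition~\eqref{eqn:pnLpsiError} now has three pieces — the interior Chebyshev error $\|\FL-\Pn\|_{y\in[-1,1]}$ together with the two domain errors at $x=0$ and $x=1$. Since $\psiSi(-s;\alpha)=1-\psiSi(s;\alpha)$, equivalently $\xL=\psi^{-1}(-L)=1-\psi^{-1}(L)$, the two domain errors coincide, so only one of them need be estimated.

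For the interior term I would first observe that for every $\alpha\le\beta$ the transplant $\F(s;\alpha)=f(\psiSi(s;\alpha))$ is analytic in the open strip $\S^{}_{\alpha}$, so that $\FL(y;\alpha)=\F(Ly;\alpha)$ is analytic in $\S^{}_{\alpha/L}$. The crux is to locate the singularities of $\FL$ nearest $[-1,1]$: these are the images, under $y=s/L$, of the branch points of the two-slit map $\psiSi(\,\cdot\,;\alpha)$ at $s=\pm\tfrac12\pm i\alpha$ — the corners where the strip walls meet the slits — namely $y=\pm\tfrac{1}{2L}\pm i\tfrac{\alpha}{L}$. One must confirm that these corner branch points, and not the images of the strip walls $\Im x=\pm\alpha$ themselves, are the binding obstruction: because $\alpha<\beta$, those walls lie in the interior of the analyticity region of $f$, so $\F$ continues analytically across the central segments $\{|\Re s|<\tfrac12\}$ of $\partial\S^{}_{\alpha}$, and the optimal Bernstein ellipse is free to bulge above the strip there, being stopped only by the four corner branch points. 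Substituting $y_1=\tfrac{1}{2L}$, $y_2=\tfrac{\alpha}{L}$ into~\eqref{eqn:muParabolaEllipse} and expanding in the regime $\alpha,\alpha/(L-\tfrac12)\to0$ (which makes $y_2\ll\sqrt{1-y_1^2}$) gives $\mu\sim y_2/\sqrt{1-y_1^2}=\alpha/\sqrt{L^2-1/4}$; inserting this into~\eqref{eqn:BernsteinBound} yields the first term of~\eqref{thm:psiS_LFixedResult1}.

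For the domain term I would compute, as $\alpha,\alpha/(L-\tfrac12)\to0$,
\[
 \xL = \psiSi(-L;\alpha) = \frac{\alpha}{\pi}\log\!\left(\frac{1+e^{-\pi(L-1/2)/\alpha}}{1+e^{-\pi(L+1/2)/\alpha}}\right) \sim \frac{\alpha}{\pi}\,e^{-\pi(L-1/2)/\alpha},
\]
and then use $|f(x)-f(0)|=\order(|x|^{\tau})$ together with a triangle inequality to bound $\|\FL-\FL(-1)\|_{y\in(-\infty,-1)}=\order(\xL^{\tau})=\order(e^{-\pi\tau(L-1/2)/\alpha})$, the polynomial prefactor $\alpha^{\tau}$ being absorbed. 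The hypothesis $|f(x)-f(1)|=\order(|1-x|^{\tau})$ disposes of the $x=1$ endpoint identically, since $1-(1-\xL)=\xL$. This produces the second term of~\eqref{thm:psiS_LFixedResult1}.

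Finally, to reach~\eqref{thm:psiS_LFixedResult2} I would balance the two exponents exactly as in~\eqref{step}: the terms decay at a common rate when $\tfrac{L-1/2}{\alpha}\sim\nu\,\tfrac{\alpha}{\sqrt{L^2-1/4}}\,n$, i.e.\ when $L-\tfrac12\sim\nu'\alpha^{4/3}n^{2/3}$, which gives rate $\ordera(C^{-\alpha^{1/3}n^{2/3}})$. Since $\alpha\to0$ is required while $L$ stays bounded, the slowest admissible decay of $\alpha$ is $\alpha=\alpha_0/\sqrt n$, forcing $L=\tfrac12+L_0$; substituting and using $L^2-1/4=L_0(1+L_0)$ then yields~\eqref{thm:psiS_LFixedResult2}. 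I expect the genuine obstacle to be the conformal-geometric step in the interior estimate — verifying that the limiting singularities of $\FL$ are exactly the corner branch points $y=\pm\tfrac{1}{2L}\pm i\tfrac{\alpha}{L}$ and that the optimal Bernstein ellipse through them clears the remaining singular rays on $\Im y=\pm\alpha/L$ — since once $\mu\sim\alpha/\sqrt{L^2-1/4}$ is in hand everything reduces to the asymptotics of~\eqref{eqn:muParabolaEllipse} and the balancing already performed for $\phiS$.
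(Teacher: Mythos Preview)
Your proposal is correct and follows essentially the same route as the paper: identify the limiting singularities of $\FL$ at $y=\pm\tfrac{1}{2L}\pm i\tfrac{\alpha}{L}$, extract $\mu\sim\alpha/\sqrt{L^2-1/4}$ (the paper re-derives the ellipse equation directly rather than quoting~\eqref{eqn:muParabolaEllipse}, but the computation is identical), estimate $\xL$ asymptotically, and then balance exponents to force $\alpha=\alpha_0/\sqrt{n}$, $L=\tfrac12+L_0$. Your added remark that the corner branch points---rather than the strip walls---are the binding obstruction is a point the paper leaves implicit.
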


\begin{proof}
As in Theorem~\ref{thm:phiS_LFixed}, for all sufficiently small $\alpha$, the maximal Bernstein ellipse parameter $\mu$ corresponding to the function $\FL$ is determined by the singularities of $\psiSi$ at $s = \pm \frac{1}{2} \pm i \alpha$. The precise asymptotic behaviour of $\mu$ can be determined as follows. We consider 
\begin{align*}
 \cos \theta \cosh \mu + i \sin \theta \sinh \mu = 1 / (2L) + i \alpha / L.
\end{align*}
Upon eliminating $\theta$ from this equation, one finds that
\begin{align*}
 1/(4 L^2 \cosh^2_{}\mu) + \alpha^2 / (L^2 \sinh^2 \mu) = 1 ,
\end{align*}
from which it is possible deduce that $\mu$ satisfies
\begin{align*}
 \mu \sim \alpha / \sqrt{L^2-1/4} , \qquad \alpha \to 0.
\end{align*}
Substituting this into~\eqref{eqn:BernsteinBound} then gives us the first error term in~\eqref{thm:psiS_LFixedResult1}. 

Next we consider the domain contributions. As $\alpha, \frac{\alpha}{L - 1/2} \to 0$, we find that
\begin{align*} 
\xL = 1 - \psiSi(L) = \psiSi(-L) \sim (\alpha/\pi) \exp\left( - \pi (L - 1/2) / \alpha \right).
\end{align*}
Combining this with the assumptions on $f$ then gives the remaining term in~\eqref{thm:psiS_LFixedResult1}.

To obtain the second part of the result~\eqref{thm:psiS_LFixedResult2}, we observe that both terms in~\eqref{thm:psiS_LFixedResult1} decay at the same rate if, for some $\nu > 0$, we have
\begin{align*}
 \frac{L - 1/2}{\alpha} \sim \nu \frac{\alpha n}{\sqrt{L^2-1/4}} \quad \Leftrightarrow \quad 
 (L - 1/2)^2_{}(L^2 - 1/4) \sim \nu^2 \alpha_{}^4 n_{}^2.
\end{align*}
Since $L$ is assumed to be bounded independently of $n$, this implies that $\alpha = \order(1/\sqrt{n})$ as $n \to \infty$. In other words, we must require that $\alpha$ decays at least as fast as $1/\sqrt{n}$.
On the other hand, some further manipulation gives
\begin{equation}\label{winter}
(L-1/2)^{3/2}_{} = \frac{\nu \alpha^2 n}{(L+1/2)^{1/2}_{}}.
\end{equation}
Hence, the rate of exponential convergence is
$$
\frac{L-1/2}{\alpha} = \frac{\nu^{2/3} \alpha^{1/3} n^{2/3}}{(L+1/2)_{}^{1/3}}.
$$
The right hand side of this equation suggests that we require the slowest possible decay of $\alpha$. This corresponds to choosing $\alpha = \alpha^{}_{0} / \sqrt{n}$ for some $\alpha^{}_{0} > 0$, giving
$$
\frac{L-1/2}{\alpha}  = \frac{\nu^{2/3} \alpha^{1/3}_0 n^{1/2}}{(L+1/2)_{}^{1/3}}.
$$
Using~\eqref{winter}, we then see that $(L-1/2)_{}^{3/2} (L+1/2)^{1/2}_{} = \nu \alpha^2_0$.  Thus, for this choice of $\alpha$, we see that $L$ is necessarily independent of $n$ and we can write $L = 1/2 + L^{}_0$ for some $L^{}_0 > 0$. This completes the proof.
\end{proof}

Figure~\ref{fig:convOneSided} provides numerical experiments verifying Theorems~\ref{thm:psiS_alphaFixed} and~\ref{thm:psiS_LFixed}.

\begin{figure}[ht]
\begin{center}
\begin{overpic}[scale=0.7]{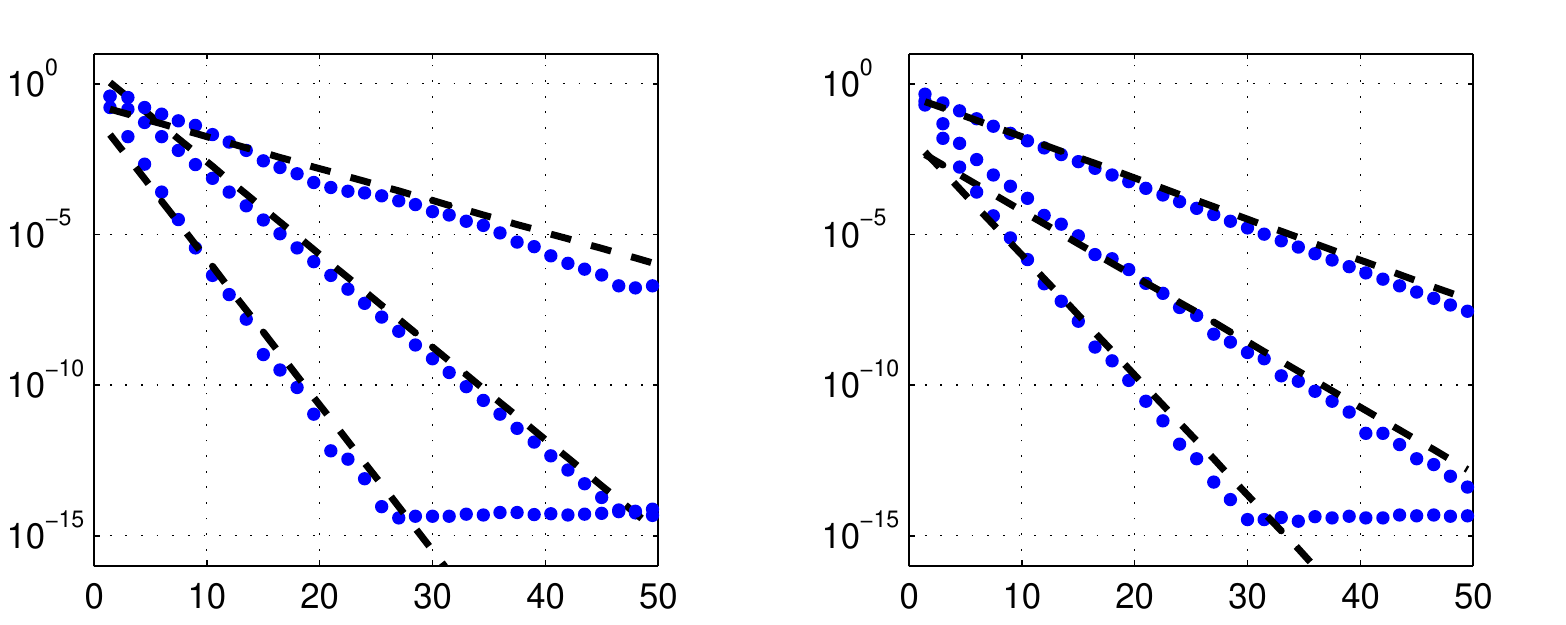}
 \put(13,39){\small $\alpha$ fixed, $L = c\sqrt{n}$}
 \put(64,39){\small $L$ fixed, $\alpha = \alpha^{}_0/\sqrt{n}$}
 \put(21,-4){\small $\sqrt{n}$}
 \put(31,16){\small $c = 0.45$}
 \put(13.5,7){\small $c = 0.9$}
 \put(32,27){\small $c = 4.1$}
 \put(74,-4){\small $\sqrt{n}$}
 \put(89,15){\small $\alpha^{}_{0} = 0.6$}
 \put(65,7){\small $\alpha^{}_{0} = 1.1$}
 \put(83,26){\small $\alpha^{}_{0} = 4$}
\end{overpic}
\vspace{1em}
\caption{\label{fig:convTwoSided} \small Numerical verification of Theorem~\ref{thm:psiS_alphaFixed} (left) and Theorem~\ref{thm:psiS_LFixed} (right) for approximation of the function $f(x) = \sqrt{x}$ on $[0,1]$ using numerical methods based upon the map $\psiS(\,\cdot\,;\alpha)$. In the left pane, we have used the value $\alpha = 1$; in the right pane, we have used $L = 1.3$. The dashed black lines indicate the theoretical slopes given in the theorems.}
\end{center}
\end{figure}

\subsection{Resolution analysis of $\psiS$} 
In order to assess the resolution properties of $\psiS$, we consider the transplanted function
\begin{align*}
 \FL(y) = f(\psiSi(Ly)) = \exp\left( 2 \alpha i \omega \log \left( \frac{  1 + \exp(\pi(Ly+1/2)/\alpha) }{ 1 + \exp(\pi(Ly-1/2)/\alpha) } \right) \right).
\end{align*}
Letting $y = \cos \theta \cosh \mu + i \sin \theta \sinh \mu$, we have
\begin{align*}
 |\FL(y)| = \exp\left( -2\alpha \omega R(\theta,\mu,\alpha,L) \right),
\end{align*}
where $R(\theta,\mu,L) $ is given by
\begin{align*}
 R(\theta,\mu,L) = \Im \left( \log \left( \frac{  1 + \exp(\pi(L(\cos \theta \cosh \mu + i \sin \theta \sinh \mu)+1/2)/\alpha) }{ 1 + \exp(\pi(L(\cos \theta \cosh \mu + i \sin \theta \sinh \mu)-1/2)/\alpha) } \right) \right).
\end{align*}
After some manipulation it is possible to rewrite this as
\begin{align*}
 R(\theta,\mu,\alpha,L) = \Im\left( \log \left( \frac{T^{}_{+}\exp(i\tau^{}_{+})}{T^{}_{-} \exp(i\tau^{}_{-}) } \right) \right) 
= \tau^{}_{+} - \tau^{}_{-},
\end{align*}
where 
\begin{align*}
 \tau^{}_{\pm} = \tan^{-1} \left(  \frac{ \exp\left( \pi \left( L \cos \theta \cosh \mu \pm 1/2 \right) / \alpha \right) \sin( \pi L (\sin \theta \sinh \mu) / \alpha) }{1 + \exp\left( \pi \left( L \cos \theta \cosh \mu \pm 1/2 \right) / \alpha \right) \cos( \pi L (\sin \theta \sinh \mu) / \alpha) } \right) .
\end{align*}
If $L$ and $\alpha$ are such that $L \mu/\alpha \to 0$ as $\mu \to 0$, then 
\begin{align*}
   \tau^{}_{\pm} 
&\sim  \tan^{-1} \left(  \frac{ \exp\left( \pi \left( L \cos \theta \pm 1/2 \right) / \alpha \right) (\pi L \mu/ \alpha) \sin \theta }{1 + \exp\left( \pi \left( L \cos \theta \pm 1/2 \right) / \alpha \right) } \right),  \\
&\sim \frac{\pi L \mu}{\alpha} \sin \theta \frac{ \exp\left( \pm \pi/(2\alpha) \right) }{\exp(-(\pi L/\alpha) \cos \theta) + \exp\left( \pm \pi/(2\alpha) \right) } .
\end{align*}
We therefore have 
\begin{displaymath}
 R(\theta,\mu,\alpha,L) \sim  \frac{\pi L \mu}{\alpha} \sin \theta \, p(\theta,\alpha,L),
\end{displaymath}
where
\begin{displaymath}
 p(\theta,a,L) = \frac{\sinh ( \pi / (2 \alpha) )}{\cosh ( (\pi L / \alpha) \cos \theta ) + \cosh (\pi / (2 \alpha) ) }.
\end{displaymath}
Observe that $p(\theta,\alpha,L)$ is (i) positive for all $\theta$, and (ii) maximised in $\theta$ when $\cos \theta = 0$. Since $\sin \theta$ is maximised at $\theta = \pi/2$ and minimised at $\theta = 3\pi/2$ (at which point it takes a negative value), it follows that the product $\sin \theta p(\theta,\alpha,L)$ is minimised over $[0,2\pi)$ at $\theta = 3\pi/2$. Thus, as $\mu, L \mu/\alpha \to 0$, we have 
\begin{displaymath}
 R(3\pi/2,\mu,\alpha,L) \sim  -  \frac{\pi L \mu}{\alpha} B(\alpha), 
\end{displaymath}
where $B(\alpha) = \frac{\sinh(\pi/(2\alpha))}{1+\cosh(\pi/(2\alpha))}$.  Therefore in~\eqref{eqn:ellipseMax} we find that
\begin{align}\label{eqn:m(mu)psiS}
 m(\mu;\FL) \sim \exp\left( 2 \pi L \omega \mu B(\alpha) \right).
\end{align}
We now divide our analysis into the two parameter regimes described above.

\subsubsection{The case $L \to \infty$, $\alpha$ fixed} Using~\eqref{eqn:m(mu)psiS}, we have the resolution criterion $n \geq 2 \pi L \omega B(\alpha)$.  This immediately gives us the following result:
\begin{theorem}\label{thm:res_psiS_Linfty}
 Let $\R(\omega;\delta)$ be given by~\eqref{eqn:epsilonResolution} where $\{\Psi^{(n)}\}_{n \in \natural}$ is the approximation scheme defined by~\eqref{eqn:pnLpsi} corresponding to the map $\psi = \psiS$.  Let $\alpha$ be fixed and let $L = c\sqrt{n}$ for some $c > 0$. Then, 
\begin{align*}
 \limsup_{\delta \to 1^{-}} \limsup_{|\omega| \to \infty} \frac{\R(\omega;\delta)}{|\omega|_{}^{2}} \leq \left ( 2 \pi c B(\alpha) \right )^2_{}.
\end{align*}
\end{theorem}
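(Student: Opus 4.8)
The plan is to read off the theorem as an immediate corollary of the estimate \eqref{eqn:m(mu)psiS}, which has already been established in the regime $\mu \to 0$, $L\mu/\alpha \to 0$, $L \to \infty$. First I would substitute \eqref{eqn:m(mu)psiS} into the Bernstein bound \eqref{eqn:BernsteinBound} for $f(x) = e^{2\pi i \omega x}$, obtaining a bound of the form
\[
\| \FL - \Pn \|^{}_{y \in [-1,1]} \leq \frac{4}{\mu}\exp\bigl(\mu(2\pi L\omega B(\alpha) - n)\bigr).
\]
Because the prefactor $4/\mu$ grows only algebraically as $\mu \to 0$, the sign of the bracket $2\pi L\omega B(\alpha) - n$ alone decides whether the right-hand side can be forced below a threshold approaching $1$; this is exactly the resolution criterion $n \geq 2\pi L\omega B(\alpha)$ recorded just above the statement. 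One should also confirm that the two domain-truncation contributions in \eqref{eqn:pnLpsiError} are asymptotically dominated by this Chebyshev term, which follows as in Section~\ref{ss:conv_psiS}: the scaling $L = c\sqrt{n} \to \infty$ drives $\xL \to 0$, and $e^{2\pi i\omega x}$ is bounded at both endpoints.

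Next I would insert $L = c\sqrt{n}$ into the criterion and solve for $n$ as a function of $\omega$. The inequality $n \geq 2\pi c B(\alpha)\sqrt{n}\,\omega$ is equivalent to $\sqrt{n} \geq 2\pi c B(\alpha)\omega$, and hence to $n \geq (2\pi c B(\alpha))^2\omega^2$. This pins down the leading-order number of samples required to resolve $e^{2\pi i\omega x}$, so that $\R(\omega;\delta) \leq (2\pi c B(\alpha))^2\omega^2(1+o(1))$ as $|\omega| \to \infty$; dividing by $|\omega|^2$ and taking the iterated $\limsup$ appearing in the statement yields the claimed bound. Structurally this reproduces the step from the criterion $n \geq \sqrt{2L/e}\,\pi\omega$ to the resolution figure \eqref{eqn:resolutionFigure_phiE} in the analysis of $\phiE$.

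Rather than a genuine obstacle, the one point demanding care is the mutual consistency of the asymptotic regimes underpinning \eqref{eqn:m(mu)psiS}. With $L = c\sqrt{n}$ and $n \approx (2\pi c B(\alpha))^2\omega^2$, one has $L \approx 2\pi c^2 B(\alpha)\omega \to \infty$ as $\omega \to \infty$, so $L \to \infty$ is automatic; and since $\mu$ is a free parameter, one may choose any $\mu = \mu(\omega) \to 0$ with $L\mu \to 0$ (for example $\mu \sim 1/(L\log L)$), which validates \eqref{eqn:m(mu)psiS} while keeping the algebraic prefactor inessential to the leading coefficient. Finally, the outer $\limsup_{\delta \to 1^-}$ absorbs the gap between forcing the error below a generic threshold and below a specific $\delta < 1$: as $\delta \to 1^-$ the excess in $n$ beyond $(2\pi c B(\alpha))^2\omega^2$ becomes negligible at leading order and so does not alter the stated constant. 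No new hard analysis is needed; the substantive work --- the identification of $B(\alpha)$ and the estimate \eqref{eqn:m(mu)psiS} --- is already complete.
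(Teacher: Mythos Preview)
Your proposal is correct and follows essentially the same route as the paper: the paper simply states that from~\eqref{eqn:m(mu)psiS} one obtains the resolution criterion $n \geq 2\pi L\omega B(\alpha)$, and that substituting $L = c\sqrt{n}$ ``immediately gives'' the theorem. You have supplied considerably more detail than the paper does (handling the domain-truncation terms, checking compatibility of the asymptotic regimes, and explaining the role of the outer $\limsup$), but the underlying argument is identical.
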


\subsubsection{The case $\alpha , \alpha/(L-1/2) \to 0$} Observe that $B(\alpha) \to 1$ as $\alpha \to 0$. Thus, in this case we have the resolution criterion $n \geq 2 \pi L \omega$, and therefore

\begin{theorem}\label{thm:res_phiS_alpha}
 Let $\R(\omega;\delta)$ be as in Theorem~\ref{thm:res_psiS_Linfty}. Let $\alpha$ and $L$ be as in Theorem~\ref{thm:psiS_LFixed}. Then the associated resolution constant $r$ is as follows: 
\begin{align}\label{eqn:resolutionFigure_psiS}
 r \leq 2 \pi \left( \limsup_{n \to \infty} L \right).
\end{align}
\end{theorem}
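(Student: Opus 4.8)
The plan is to convert the asymptotic estimate~\eqref{eqn:m(mu)psiS} into a resolution criterion via Bernstein's bound~\eqref{eqn:BernsteinBound}, and then feed that criterion into the definition~\eqref{eqn:resolutionConstant} of $r$, exactly as was done for the map $\psiE$. First I would substitute $m(\mu;\FL)\sim\exp(2\pi L\omega\mu B(\alpha))$ into~\eqref{eqn:BernsteinBound} to obtain, for the Chebyshev interpolation term in~\eqref{eqn:pnLpsiError},
\[
\|\FL-\Pn\|^{}_{y\in[-1,1]}\lesssim\frac{4}{\mu}\exp\bigl((2\pi L\omega B(\alpha)-n)\mu\bigr),
\]
valid as $\mu\to0$ with $L\mu/\alpha\to0$. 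Since $\mu>0$ is at our disposal and may be sent to zero, this quantity falls below any fixed $\delta\in(0,1)$ precisely when the coefficient of $\mu$ is negative; that is, the interpolation error is resolved once $n\geq 2\pi L\omega B(\alpha)$.

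Next I would dispatch the two domain-error terms in~\eqref{eqn:pnLpsiError}. For $f(x)=e^{2\pi i\omega x}$ one has $|\FL|\leq1$ throughout, and the endpoint values are $\FL(\mp1)=\exp(2\pi i\omega\xL)$ and $\exp(2\pi i\omega(1-\xL))$, so each domain error is bounded by $2\pi\omega\xL$. From Theorem~\ref{thm:psiS_LFixed}, $\xL\sim(\alpha/\pi)\exp(-\pi(L-1/2)/\alpha)$ decays super-algebraically as $\alpha\to0$, whence $\omega\xL\to0$ in the present regime. The domain errors therefore vanish in the limit and are dominated by the interpolation term, so the resolution of the full scheme is governed entirely by $n\geq2\pi L\omega B(\alpha)$.

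Finally I would exploit the monotonicity $B(\alpha)=\tanh(\pi/(4\alpha))<1$, with $B(\alpha)\to1^{-}$ as $\alpha\to0$. Because $B(\alpha)<1$ for all $\alpha>0$, the criterion above is implied by $n\geq2\pi L\omega$, giving $\R(\omega;\delta)\leq\lceil2\pi L\omega B(\alpha)\rceil$ and thus $\R(\omega;\delta)/|\omega|\lesssim2\pi L B(\alpha)$. In the regime of Theorem~\ref{thm:psiS_LFixed} one has $\alpha\to0$ as $n\to\infty$; since $n=\R(\omega;\delta)\to\infty$ as $|\omega|\to\infty$, it follows that $B(\alpha)\to1$ along this path. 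Taking $\limsup_{|\omega|\to\infty}$ and then $\limsup_{\delta\to1^{-}}$ in~\eqref{eqn:resolutionConstant} yields $r\leq2\pi\bigl(\limsup_{n\to\infty}L\bigr)$, as claimed; the $\limsup$ on $L$ accommodates the case where $L$ is allowed to vary with $n$ rather than held at the fixed value $1/2+L_0$.

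The step I expect to be the main obstacle is making these asymptotic (``$\sim$'') relations rigorous under the coupled limits: $n$, $\alpha$, and $L$ are all linked (through $\alpha=\alpha_0/\sqrt n$ and $L=1/2+L_0$), and $n$ is itself slaved to $\omega$ and $\delta$ through $n=\R(\omega;\delta)$. One must confirm that the induced $n$ genuinely diverges as $|\omega|\to\infty$ so that $B(\alpha)\to1$, and that the sign of the exponent coefficient is controlled uniformly enough to promote the one-sided estimate into the $\limsup$ inequality. Once the bound $B(\alpha)<1$ is in hand, however, the remaining manipulations are routine and parallel the $\psiE$ analysis.
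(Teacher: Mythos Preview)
Your proposal is correct and follows essentially the same route as the paper: derive the resolution criterion $n\geq 2\pi L\omega B(\alpha)$ from~\eqref{eqn:m(mu)psiS} via Bernstein's bound, observe that $B(\alpha)\to1$ as $\alpha\to0$, and conclude $r\leq 2\pi\limsup_{n\to\infty}L$. Your treatment is in fact more thorough than the paper's, which dispatches the result in two sentences and does not explicitly address the domain-error terms or the coupled-limit subtleties you flag.
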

Figure~\ref{fig:numericalExperiments_psiS} provides a numerical verfication of Theorem~\ref{thm:res_phiS_alpha}.

\begin{figure}[ht]
\begin{center}
\begin{overpic}[scale=0.55]{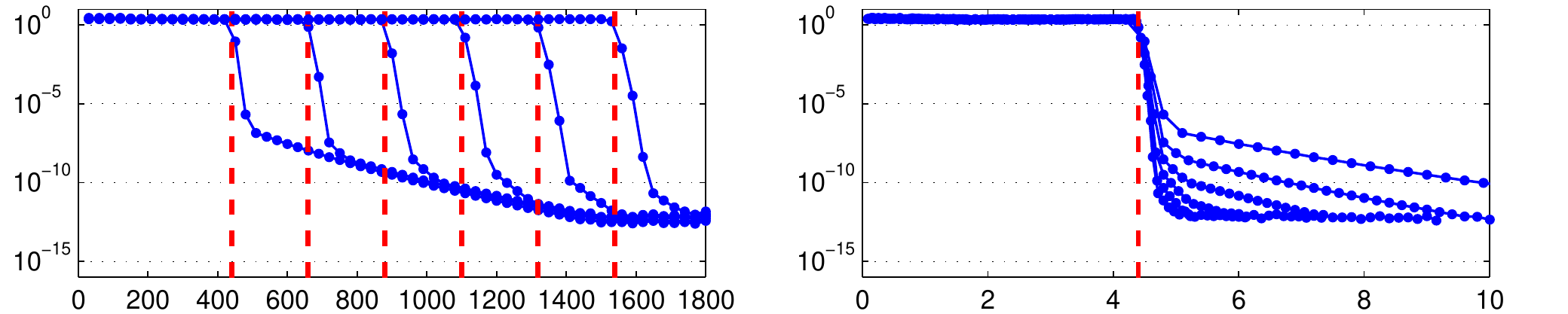}
 \put(25,-3) {\footnotesize $n$}
 \put(73,-3) {\footnotesize $n/\omega$}
\end{overpic}
\vspace{0.5em}
\caption{\label{fig:numericalExperiments_psiS} \small The error in approximating $e^{2\pi i \omega x}$ on $[0,1]$ for $\omega = 50,100,\ldots,350$ using~\eqref{eqn:pnLpsi} in conjunction with the map $\psi = \psiS(\,\cdot\,;\alpha)$. For these experiments, we have set $L = 0.7$ and $\alpha = 0.8/\sqrt{n}$. The dashed red lines indicate the estimate $1.4 \pi \omega \approx 4.3982 \omega$ given by~\eqref{eqn:resolutionFigure_psiS}.
}
\end{center}
\end{figure}

\section{Parameter choices}\label{parameterChoices}
Thus far, we have seen that appropriate parameter choices lead to a finite resolution constant $r > \pi$.  In this final section of the paper, we consider an alternative approach to choosing the parameters $\alpha$ and $L$ which leads to a numerical scheme with formally optimal resolution properties, requiring $\pi$ ppw to begin resolving $e^{2\pi i \omega x}$ as $\omega \to \infty$. To do this, we give up classical convergence of the approximation -- that is, convergence down to $0$ as $n \rightarrow \infty$ -- and instead seek only a finite, but user-controllable error tolerance $0 < \varepsilon < 1$.  Although in theory the difference between these two scenarios is significant, in numerical computations we are always limited by machine epsilon.  Hence if the maximal accuracy $\varepsilon$ is pre-selected to be sufficiently small then we lose little over insisting on classical convergence for all $n$.  Moreover, as we shall show, we get a significant improvement in the effective convergence rate in the regime before such a tolerance is reached.

Approaches related to this technique have been used before in several other contexts. Most notably, Kosloff and Tal-Ezer~\cite{KTE} used a similar idea to improve timestepping restrictions in spectral methods for PDEs. See also~\cite{AdcockHuybrechs}.

\subsection{Parameter choices for \boldmath{$\phiS$}}\label{parameterChoicesphiS}
This approach is based on the error expression~\eqref{thm:phiS_LFixedResult1}, and the idea is as follows.  We first equate one of the terms in~\eqref{thm:phiS_LFixedResult1} to $\varepsilon$, and second, we select $\alpha$ and $L$ such that the other term decays as quickly as possible subject to the existing constraints in Theorem \ref{thm:phiS_LFixed}.

Consider the first step.  Since $\tau$ will in general be unknown, we elect to use the first term in~\eqref{thm:phiS_LFixedResult1}.  Thus, we pick $\alpha$ and $L$ such that
\begin{align*}
\frac{\alpha n}{\sqrt{L-1}}  = | \log \varepsilon | \quad 
\Leftrightarrow \quad
L = 1 + \frac{n^2 \alpha^2}{|\log \varepsilon|^2}.
\end{align*}
Substituting this expression for $L$ into the second term gives
\begin{align*}
\frac{L-1}{\alpha} = \frac{\alpha n^2}{|\log \varepsilon|^2}. 
\end{align*}
Since $L$ is assumed to be bounded, this equation implies that $\alpha = \order(n^{-1})$.  On the other hand, since $\alpha / (L-1) \rightarrow 0$ for Theorem \ref{thm:phiS_LFixed} to hold, this equation also states that  $1/ \alpha = o(n^2)$. Suppose that $\alpha$ and $L$ are as just described, then~\eqref{thm:phiS_LFixedResult1} implies
\begin{align*}
\| f - \pnL \| = \ordera \left ( \max \left \{ \, \varepsilon \, , C^{-\alpha n^2} \right \} \right ),\qquad C = \exp \left ( \pi \tau/ | \log \varepsilon |^2 \right ),
\end{align*}
as $n \to \infty$. In particular, if we set
\begin{align}\label{eqn:phiSparams}
\alpha = \sigma | \log \epsilon | n^{p-2},\quad L = 1 + \sigma^2 n^{2p-2},
\end{align}
for some $\sigma > 0$ and $0 < p < 1$, then as $n \to \infty$,
\begin{align}\label{eqn:err_asymp}
\| f - \pnL \| = \ordera \left ( \max \left \{ \, \varepsilon \, , C^{-n^{p}} \right \} \right ),\qquad C = \exp \left ( \pi \tau \sigma / | \log \varepsilon | \right ).
\end{align}
Hence, asymptotically the error behaves like $C^{-n^p}$ until $n$ is sufficiently large such that this term is less than $\epsilon$.  In other words, we obtain exponential convergence at rate $p$ down to a maximal achievable accuracy proportional to $\epsilon$.  Moreover, since the associated choice of parameters has $L \to 1$ as $n \to \infty$, by Theorem~\ref{thm:phiSoptimalResolutionResult} we also obtain $\pi$ ppw asymptotically.  Thus, by conceding convergence for all $n$, we obtain formally optimal resolution power, in the sense that the ppw figure is determined by that of the underlying approximation (i.e.\ Chebyshev interpolants).

Interestingly, by making such a parameter choice we can actually obtain a \emph{better} rate of convergence than that of the exponential map $\phiE$ (recall that the convergence rate therein is exponential with index $2/3$).  We stress at this point, however, that the estimate~\eqref{eqn:err_asymp} holds asymptotically as $n \to \infty$, and not for fixed $n$.  Thus it is not an error bound per se, and in practice one may see slower convergence for small $n$ before the onset of the predicted asymptotic behaviour. 

Figure~\ref{fig:optimalParamsphiS} provides a numerical experiment corroborating the above arguments.

\begin{figure}[ht]
\begin{center}
\begin{overpic}[scale=0.55]{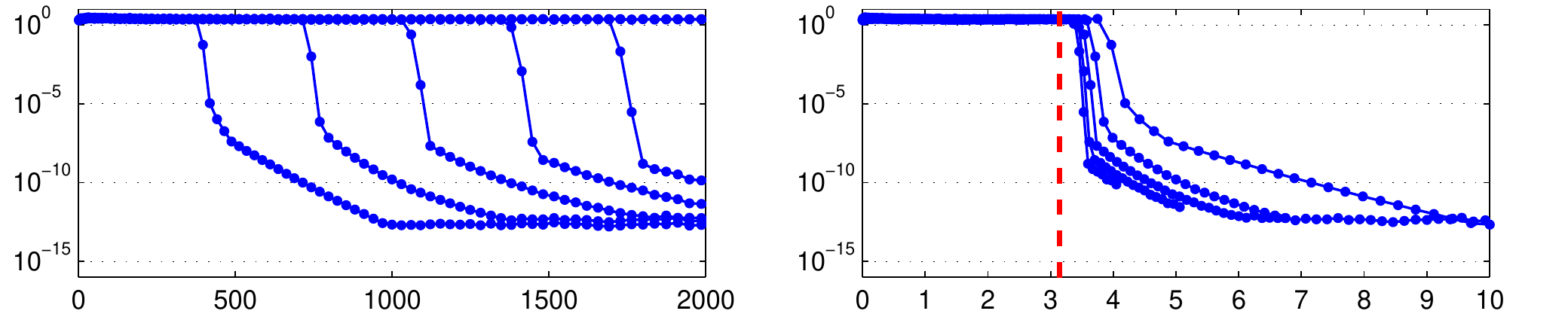}
 \put(25,-3) {\footnotesize $n$}
 \put(73,-3) {\footnotesize $n/\omega$}
\end{overpic}
\vspace{0.5em}
\caption{\label{fig:optimalParamsphiS} \small The error in approximating $e^{2\pi i \omega x}$ on $[0,1]$ for $\omega = 50,100,\ldots,350$ using~\eqref{eqn:pnLphi} in conjunction with the map $\varphi = \phiS(\,\cdot\,;\alpha)$, where $\alpha$ and $L$ are set according to the relationships given by~\eqref{eqn:phiSparams}. We have used the values $\sigma = 3.5$, $p = 2/3$ and $\varepsilon = 2^{-52}$.}
\end{center}
\end{figure}

\subsection{Parameter choices for \boldmath{$\psiS$}}

A very similar analysis can be performed for $\psiS$.  Arguing in the same way, one obtains the following.  If, for some arbitrary $\sigma > 0$, $p \in (0,1]$ we set
\begin{align}\label{eqn:psiSparams}
 \alpha = \sigma |\log \varepsilon| n_{}^{p-2}, \quad L = \sqrt{1/4 + \sigma^2n_{}^{2p-2}},
\end{align}
 then, as $n \to \infty$ we have
\begin{align*}
 \| f - \pnL \| = \ordera \left( \max\left\{ C^{-n_{}^{p}} , \varepsilon \right\} \right), 
 \end{align*}
 where
\begin{align*}
 C = \left \{ \begin{array}{lc} \exp(\pi \tau \sigma/|\log \varepsilon|)& 0<p<1 \\ \exp \left ( \pi \tau \left ( \sqrt{1/4+\sigma^2}-1/2 \right ) / (\sigma | \log \epsilon |)\right ) & p=1. \end{array} \right .
\end{align*}
Much as in the previous case, we draw the same conclusions.  By forfeiting classical convergence for all $n$, we are able to obtain both optimal resolution and a faster asymptotic decay of the first error term.  See Figure~\ref{fig:optimalParamspsiS} for a numerical experiment.

\begin{figure}[ht]
\begin{center}
\begin{overpic}[scale=0.55]{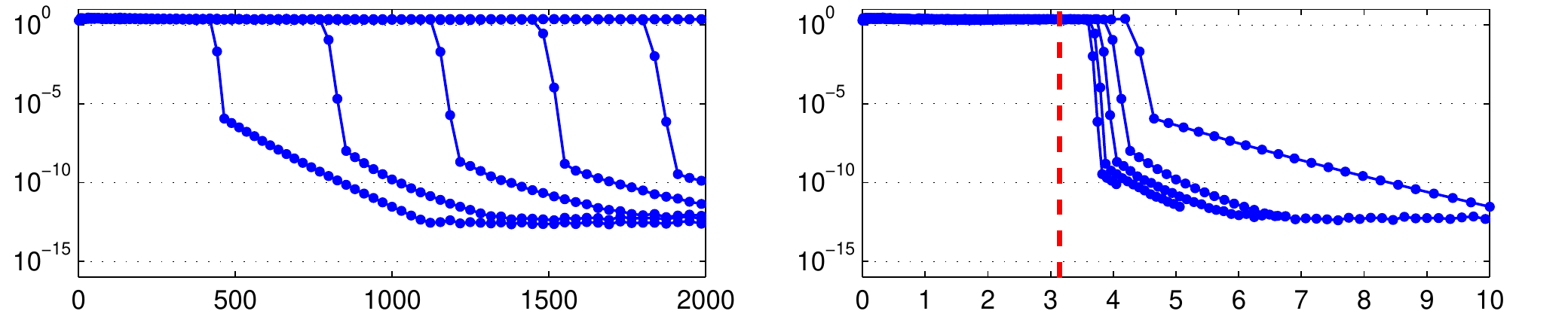}
 \put(25,-3) {\footnotesize $n$}
 \put(73,-3) {\footnotesize $n/\omega$}
\end{overpic}
\vspace{0.5em}
\caption{\label{fig:optimalParamspsiS} 
 The error in approximating $e^{2\pi i \omega x}$ on $[0,1]$ for $\omega = 50,100,\ldots,350$ using~\eqref{eqn:pnLpsi} in conjunction with the map $\psi = \psiS(\,\cdot\,;\alpha)$, where $\alpha$ and $L$ are set according to the relationships given by~\eqref{eqn:psiSparams}. Here we have used the values $\sigma = 3.5$, $p = 2/3$ and $\varepsilon = 2^{-52}$.}
\end{center}
\end{figure}

\section{Practical considerations}

There are a few issues of which one should be aware when implementing numerical methods based on the slit-strip maps $\phiS$ and $\psiS$ in finite precision arithmetic. First, if implemented naively, the formulas for $\phiS,\phiSi$ and $\psiS,\psiSi$ may, in certain circumstances, result in cancellation errors. Fortunately, these effects can largely be avoided by approximating terms such as $\exp(x)-1$ and $\log(1+x)$ using an appropriate series expansion near $x=0$. (This is taken care of automatically in Matlab by the \texttt{expm1} and \texttt{log1p} functions.)

A further important issue concerns the fact that there is practical limit on the smallest permissible strip-width parameter $\alpha$. In double precision, the limit is approximately $0.005$ for both $\phiS$ and $\psiS$. One can understand why this is the case by observing that the quantity $\exp(\pi/0.004)$ is larger than the largest double-precision floating point number. There may be a way to circumvent this issue, but such matters are beyond the scope of the present investigation.

This second point has implications for the parameter choices of Section~\ref{parameterChoices}.
In particular, it may sometimes be awkward to choose parameters which give both good convergence {\em and} good resolution. Take Section~\ref{parameterChoicesphiS} for example, where one chooses $p$ and $\sigma$ and then sets $\alpha = \sigma|\log\varepsilon|n_{}^{p-2}$, $L = 1 + \sigma^2n^{2p-2}$. One can see from these formulas that $\alpha$ decreases at least as fast as $1/n$. Thus, in order to ensure that the $0.005$ limit is not breached too quickly, one should choose a larger $\sigma$. But this has the effect of increasing $L$ and hence worsening the resolution for finite $\omega$. Fortunately, however, as we demonstrate in Figures~\ref{fig:optimalParamsphiS} and~\ref{fig:optimalParamspsiS}, it is in practice usually possible to find a choice of parameters which balance these effects. 

\section{Concluding remarks}\label{sect:conc}

The focus of this paper is the introduction of new exponential variable transform methods to approximate functions with singularities at one or both endpoints.  Once the function $f(x)$ has been mapped accordingly, one next applies an appropriate approximation strategy on either the infinite or semi-infinite interval.  Throughout we have used domain truncation, followed by Chebyshev interpolation, for this purpose.  This choice was made for its simplicity and ubiquity, and because it allows one to make a mathematical comparison that confirms the advantage of the new mappings.

However, this approximation is by no means the only option, and there are definite advantages to other choices.  For example, when using the infinite interval maps $\psiE$ and $\psiS$, Fourier series would certainly be viable options.  Quoting Boyd, we note that ``Chebyshev domain truncation is inferior to Fourier domain truncation for solving problems on an infinite interval'' \cite{Boyd88}.  We expect that with, albeit nontrivial, modifications, the results derived here will apply when using Fourier interpolants instead. Specifically, we conjecture, for example, that the analogous resolution result in the case of $\psiE$ would follow simply from the figure given in~\eqref{eqn:resolutionFigure_psiEwithoutc} multiplied by $2/\pi$.  This is an important topic for future work.  Alternatively, one may avoid domain truncation altogether, and consider either $\mathrm{sinc}$ interpolants, or rational Chebyshev approximation on the whole line.  These will also be considered in a future work.
 
Another topic for future investigations is that of the mapping parameters $\alpha$ and $L$.  In this paper, we have investigated three such choices: (i) $\alpha$ fixed and $L$ varying, (ii) $\alpha$ varying and $L$ fixed, and (iii) $\alpha$ and $L$ both varying.  As shown, the first leads to good convergence but poor resolution, the second leads to near-optimal resolution and, in the case of $\phiS$, only slightly worse convergence, and with the third, one obtains optimal resolution but forgoes classical convergence.  We have made no attempt to determine, either analytically or numerically, optimal choices for $\alpha$ and $L$.  Whilst we suspect that optimal choices may bring only marginal, and possibly function-dependent benefits, this is nonetheless an interesting question to address.

Finally, we remark that we have only examined \emph{exponential} transforms in this paper. However, there exists a further class of transformations called \emph{double-exponential} (DE) transforms which also have application within our variable-transform framework. DE transforms elicit double-exponential, rather than exponential, decay of their respective transplanted functions, and were first studied by Takahasi and Mori in~\cite{TakahasiMori74} and also later in e.g.~\cite{TanakaSugiharaMurotoMori09,TanakaSugiharaMuroto09}. Their advantage over exponential transforms is that they typically obtain faster rates of convergence, though this comes at the cost of more stringent analyticity requirements. To the best of our knowledge, a resolution analysis of such maps has not been performed, so we leave this as future work as well.

\vspace{1em}
\textbf{Acknowledgements.} We would like to thank Nick Trefethen and Daan Huybrechs, both of whom read an early draft of this manuscript and provided helpful comments. We additionally credit Nick Trefethen with proposing the use of a conformal map onto an infinite slit-strip.

\end{document}